\newif\ifdraft
\newif\ifarxiv
\renewcommand{\itemautorefname}{\@gobble}
\newcommand{%
    \def\svgwidth{1\columnwidth}
    \import{./figures/}{.pdf_tex}
}[2][1]{%
    \def\svgwidth{#1\columnwidth}
    \import{./figures/}{#2.pdf_tex}
}
\theoremstyle{plain}
\newtheorem{theorem}{Theorem}[section]
\newtheorem*{theorem*}{Theorem}
\newtheorem{theoremX}{Theorem}
\newtheorem{prop}[theorem]{Proposition}
\newtheorem{corollary}[theorem]{Corollary}
\newtheorem{lemma}[theorem]{Lemma}
\theoremstyle{definition}
\newtheorem{defn}[theorem]{Definition}
\newtheorem{example}[theorem]{Example}
\newtheorem*{conj*}{Conjecture}
\newtheorem{remark}[theorem]{Remark}
\DeclareMathOperator{\codim}{codim}
\begin{document}
\allowdisplaybreaks
\title[Distance to the Cut Locus]{Distance from a Finsler Submanifold to its Cut Locus and the Existence of a Tubular Neighborhood}

\author[A. Bhowmick]{Aritra Bhowmick}
\address{Department of Mathematics, Indian Institute of Science, Bengaluru, India}
\email{aritrab@iisc.ac.in}
\author[S. Prasad]{Sachchidanand Prasad}
\address{School of Mathematics, Jilin University, China}
\address{Faculty of Mathematics and Computer Science, G\"{o}ttingen University, Germany}
\email{sachchidanand.prasad1729@gmail.com}

\subjclass[2020]{Primary: 53C22, 53B40; Secondary: 53C60}

\keywords{cut locus, Finsler geometry, Finsler submanifolds, tubular nieghborhood, principal curvature, Finslerian hessian}

\begin{abstract}
    In this article we prove that for a closed, not necessarily compact, submanifold $N$ of a possibly non-complete Finsler manifold $(M, F)$, the cut time map is always positive. As a consequence, we prove the existence of a tubular neighborhood of such a submanifold. When $N$ is compact, it then follows that there exists an $\epsilon > 0$ such that the distance between $N$ and its cut locus $\mathrm{Cu}(N)$ is at least $\epsilon$. This was originally proved by B. Alves and M. A. Javaloyes (\emph{Proc. Amer. Math. Soc.} 2019). We have given an alternative, rather geometric proof of the same, which is novel even in the Riemannian setup. We also obtain easier proofs of some results from N. Innami et al. (\emph{Trans. Amer. Math. Soc.}, 2019), under weaker hypothesis.
\end{abstract}

\date{\today}
\maketitle

% \tableofcontents

\setcounter{tocdepth}{3}

\frenchspacing %It will take care of unnecessary space after full stops

\section{Introduction} \label{sec:introduction}
In the field of Riemannian geometry, \emph{geodesics} are curves that are locally distance minimizing. If a connected Riemannian manifold $(M, g)$ is assumed to be complete, then between any two points, say, $p, q \in M$, there always exists a geodesic which is globally distance minimizing, which we simply say is a \emph{minimizer}. This leads to the notion of the \emph{cut locus} of a point $p$, originally introduced by Henri Poincar\'{e} \cite{Poin05}. The cut locus $\mathrm{Cu}(p)$ of a point $p$ is the set consisting of all points $q$ such that there exists a minimizer joining $p$ to $q$, any extension of which fails to be globally distance minimizing. Cut locus has been extensively studied in the literature \cite{Kob67,Buc77,Wol79,Sak96}. In particular, it is well-known that $\mathrm{Cu}(p)$ is the closure of the collection of those points $q$ for which there are at least two distinct minimizers joining $p$ to $q$. Now, the classical Whitehead convexity theorem \cite{Whitehead32,ChEb75} states that around every point $p \in M$, there exists a sufficiently small metric ball, say, $B$ such that for any $q \in B$ there exists a unique minimizer joining $p$ to $q$. As an immediate consequence, one concludes that $p \not \in \mathrm{Cu}(p)$.

The notion of cut locus readily generalizes to submanifolds $N \subset M$. Given any point $q \in M$, an $N$-segment is a minimizer joining a point in $N$ to $q$, with length equal to the distance from $N$ to $q$. The cut locus of $N$, denoted $\mathrm{Cu}(N)$, consists of those points $q$ such that there is an $N$-segment joining $N$ to $q$, any extension of which fails to be distance minimizing from $N$. It requires some effort to prove that $\mathrm{Cu}(N)$ is disjoint from $N$. The standard proof involves looking at the \emph{normal exponential map} associated to $N$, which is the usual exponential map restricted to the normal bundle of $N$. One first proves that it is an immersion near the $0$-section, and then, one shows that it is an embedding in some sufficiently small neighborhood \cite{Lee18}. Such a neighborhood is known as a \emph{tubular neighborhood}. As a consequence, it follows that $N \cap \mathrm{Cu}(N) = \emptyset$. In \cite{Oneil83}, the author proves this fact for \emph{semi}-Riemannian manifolds. We also note that in the context of sub-Riemannian geometry, there are examples where $p \in \mathrm{Cu}(p)$ for a point $p$ in a sub-Riemannian manifold \cite{Mont02}. \medskip

Finsler geometry is a natural generalization of the Riemannian one, which was first studied by P. Finsler in his dissertation \cite{Fin51}. A \emph{Finsler metric} $F$ on a manifold $M$ is a parametrized collection of Minkowski norms on each tangent space, which, unlike a Riemannian norm, may not be induced by an inner product. In this generality one needs to overcome certain challenges, nevertheless, most of the standard notions from Riemannian geometry find their natural counterparts in the Finsler setting, see \cite{AbaPat94,Bao2000,Shen01,Ohta2021,BhoPra2023} for a survey of results. In particular, geodesics and the notion of cut locus are readily defined. Furthermore, Whitehead convexity theorem still holds \cite{Bao2000}, and one immediately obtains that the cut locus of a point in a Finsler manifold is disjoint from the point itself.

One of the primary difficulties in dealing with a submanifold $N$ of a Finsler manifold $(M, F)$ is that, in the absence of an inner product, the natural replacement of a normal bundle of $N$ is no longer a vector bundle; it is only a cone bundle, and it may have singularities at the zero section. Consequently, if one attempts to prove that $N \cap \mathrm{Cu}(N) = \emptyset$, the proof technique from the Riemannian geometry immediately breaks down, since the normal exponential map is not even differentiable near the zero section. \medskip

The primary goal of this article is to prove \autoref{thm:cutTimePositive}, and as an immediate corollary we obtain the following.

\begin{theoremX}[\autoref{cor:cutLocusDisjoint}]
    Given a closed (not necessarily compact) submanifold $N$ of a Finsler manifold $(M, F)$, we have $N \cap \mathrm{Cu}(N) = \emptyset$.
\end{theoremX}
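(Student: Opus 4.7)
The strategy is to derive the corollary directly from \autoref{thm:cutTimePositive}. The plan is first to unpack the characterization of the cut locus and then to combine it with the strict positivity of the cut time map. Specifically, any point $q \in \mathrm{Cu}(N)$ is reached by some $N$-segment $\gamma : [0, \ell] \to M$ emanating from a point $p_0 \in N$, with $\ell = d(N,q)$, such that no extension of $\gamma$ past time $\ell$ continues to minimize the distance from $N$. Using the standard fact that an $N$-segment meets $N$ orthogonally (with respect to the Finsler fundamental tensor) at its starting point, $\gamma$ is of the form $\gamma_v$ for a unit normal vector $v$ at $p_0$, and by the very definition of the cut time one has $\ell = t_c(v)$. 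Hence $\mathrm{Cu}(N)$ is precisely the image of the cut time map under the normal exponential,
\[
    \mathrm{Cu}(N) \;=\; \bigl\{\, \gamma_v(t_c(v)) \,:\, v \text{ unit normal to } N, \; 0 < t_c(v) < \infty \,\bigr\}.
\]

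Suppose, for contradiction, that there exists $p \in N \cap \mathrm{Cu}(N)$. Writing $p = \gamma_v(t_c(v))$ for some unit normal $v$ at some $p_0 \in N$ with $t_c(v) < \infty$, and using that $\gamma_v|_{[0, t_c(v)]}$ is an $N$-segment of arc-length $t_c(v)$, I would conclude
\[
    d(N, p) \;=\; t_c(v).
\]
However, $p \in N$ forces $d(N, p) = 0$, and therefore $t_c(v) = 0$. This directly contradicts \autoref{thm:cutTimePositive}, which asserts $t_c(v) > 0$ for every unit normal vector $v$ of $N$.

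The argument is essentially tautological once the positivity of the cut time map has been established, and I do not anticipate any serious obstacle at this stage. All the substance lies in \autoref{thm:cutTimePositive} itself; that theorem must handle the genuinely Finslerian difficulty flagged in the introduction, namely that the normal cone bundle is not a vector bundle and the normal exponential map typically fails to be differentiable along the zero section, so the inverse-function-theorem proof used in the Riemannian setting does not transfer. The only point to verify when writing the corollary formally is that the identification of $\mathrm{Cu}(N)$ with the image of the cut time map (and in particular the orthogonality of $N$-segments to $N$) has been set up in the paper before this corollary is invoked.
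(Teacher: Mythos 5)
Your proof is correct and follows essentially the same route as the paper: represent $p\in N\cap\mathrm{Cu}(N)$ as $\exp^\nu(\rho(\mathbf{v})\mathbf{v})$ for some $\mathbf{v}\in S(\nu)$, observe that $d(N,p)=0$ forces $\rho(\mathbf{v})=0$, and contradict the positivity from \autoref{thm:cutTimePositive}. The identification of cut points with images of the cut time map that you flag as the prerequisite is indeed recorded in the paper immediately after \autoref{eq:tangentCutLocus}, so there is no gap.
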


In fact, when $N$ is compact, in \autoref{cor:injectivityRadiusPositive} we show that $d(N, \mathrm{Cu}(N)) > 0$. This follows immediately from the next theorem. Note that the notion of a tubular neighborhood of a submanifold can be naturally extended to the Finsler context (\autoref{defn:geometricTubularNbd}).

\begin{theoremX}[\autoref{thm:tubularNBD}]
    Given a closed submanifold $N$ of a Finsler manifold $(M, F)$, there exists a tubular neighborhood of $N$. If $N$ is compact, then we can get a tubular neighborhood whose image under the normal exponential map is $\left\{ x \in M \;\middle|\; d(N, x) < \epsilon \right\}$ for some $\epsilon > 0$, where $d(N,\_ )$ is the distance function from $N$.
\end{theoremX}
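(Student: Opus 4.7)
The plan is to assemble the tubular neighborhood directly from the cut time map, exploiting that by \autoref{thm:cutTimePositive} this map is strictly positive on the unit normal cone bundle $\nu^1 N$. The starting observation is that the cut time function $\tau$ on $\nu^1 N$ is lower semi-continuous and positive (the semi-continuity being standard and presumably recorded earlier in the paper en route to \autoref{thm:cutTimePositive}). Using a partition of unity on $N$, I would extract from $\tau$ a strictly positive continuous function $\delta \colon N \to (0, \infty)$ with $\delta(p) < \tau(v)$ for every $v \in \nu^1_p N$, where the defining inequality holds uniformly in a neighborhood of each point.

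Next I would consider the open set
\[
U \;=\; \bigl\{ tv \;\big|\; p \in N,\; v \in \nu^1_p N,\; 0 \le t < \delta(p) \bigr\}
\]
in the normal cone bundle $\nu N$, a neighborhood of the zero section, and verify that the normal exponential map $E \colon U \to M$ is a homeomorphism onto an open subset of $M$. Injectivity is immediate from the definition of the cut time: two distinct pre-images of the same point under $E$ would produce two distinct $N$-segments to it, both of length strictly below the respective cut times, contradicting the uniqueness of the minimizing $N$-segment before the cut time. Away from the zero section $\nu N$ is a smooth submanifold of $TM$, $E$ is smooth, and (as the paper presumably establishes from the absence of focal points strictly before the cut time) is an immersion; combined with injectivity this gives a smooth open embedding on $U$ minus the zero section. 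Along the zero section continuity follows from $E(0_p) = p$ and the elementary estimate $d(N, E(tv)) \le tF(v)$, which, together with the open embedding away from $N$, also forces openness there.

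When $N$ is compact, the unit normal cone bundle $\nu^1 N$ is itself compact, hence the lower semi-continuous positive function $\tau$ attains a positive infimum $\epsilon > 0$. Taking $\delta \equiv \epsilon$ in the construction above produces the desired tubular neighborhood. The inclusion $E(U) \subseteq \{x \in M : d(N, x) < \epsilon\}$ is immediate from the length bound $d(N, E(tv)) \le t < \epsilon$. For the reverse inclusion, any $x$ with $d(N, x) < \epsilon$ admits an $N$-segment $\gamma$ from some foot $p \in N$ of length $\ell < \epsilon$; since $\ell < \epsilon \le \tau(\gamma'(0))$, the vector $\ell\,\gamma'(0)$ lies in $U$ and satisfies $E(\ell\,\gamma'(0)) = x$.

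The main obstacle, and the reason the Riemannian argument does not transplant verbatim, is the singular behavior of the normal cone bundle at the zero section, which blocks the usual inverse function theorem approach along $N$. The resolution is to decouple the treatment of the zero section (handled by soft metric estimates and the identity $E|_{N} = \mathrm{id}$) from the treatment of strictly positive normal distances (handled by smoothness and the cut time). With \autoref{thm:cutTimePositive} already in hand the remaining work is essentially bookkeeping; the genuine difficulty of the Finsler setting was absorbed into that earlier result.
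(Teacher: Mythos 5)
Your construction follows the paper's argument closely: positivity of the cut time from \autoref{thm:cutTimePositive}, a continuous positive function $\epsilon$ on $N$ via partition of unity (the paper uses a compact exhaustion to build it), the open set $\mathcal{U}$ of normal vectors below that threshold, injectivity from the fact that points in the image are reached by a unique $N$-segment, a local-diffeomorphism argument away from the zero section because there are no focal points before the cut time, and invariance of domain (which you leave implicit) to get a homeomorphism near the zero section. Two points deserve correction. First, you lean on \emph{lower semi-continuity} of the cut time map $\rho$ as something "presumably recorded earlier in the paper," but the paper never establishes this, and it can fail when $(M,F)$ is not complete (continuity of $\rho$ is only cited under the hypothesis (\hyperref[eq:H]{H})). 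What \autoref{thm:cutTimePositive} actually gives is the weaker, and exactly sufficient, statement that around every $\mathbf{n}\in S(\nu)$ there is a neighborhood on which $\rho$ is bounded below by some positive constant; your partition-of-unity step should be run directly from that local uniform bound rather than from semi-continuity.

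Second, and more substantively, the reverse inclusion in the compact case has a genuine gap. You write that any $x$ with $d(N,x)<\epsilon$ "admits an $N$-segment $\gamma$ from some foot $p\in N$," but the theorem makes no completeness assumption, so the infimum defining $d(N,x)$ need not be attained and there need not be a minimizing geodesic from $p$ to $x$ at all. The paper handles this by shrinking $\epsilon$ once more: using compactness of $N$ together with the Whitehead convexity theorem, it arranges that every forward ball $B_{+}(p,\epsilon)$, $p\in N$, is strongly geodesically convex; then a minimizing sequence $p_i\in N$ has a limit $p$ by compactness of $N$, the point $x$ lies in $B_{+}(p,\epsilon)$, convexity produces the unique minimizer from $p$ to $x$, and the first variation formula upgrades it to an $N$-segment. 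Without inserting this step, the equality $\exp^\nu(\mathcal{U})=\{x : d(N,x)<\epsilon\}$ is not justified in the stated generality.
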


We would like to point out that most of the results in this article, including the ones mentioned above, were originally proved in \cite{Alves2019}, albeit with a different approach. See \autoref{sec:javaloyes} for a further discussion.

\subsection*{Conventions} Given a bundle $E$ over a manifold, $\Gamma E$ denotes the sheaf of sections of $E$, and in particular, $X \in \Gamma E$ denotes a \emph{local} section of $E$ over an unspecified open set in $M$. Boldface letters, e.g., $\mathbf{v},\mathbf{n}$ etc. will always denote tangent vectors. Unless stated explicitly, we have not assumed any completeness assumption on the Finsler metric. As such, whenever we have considered a geodesic, it is tacitly assumed to be defined in the maximal possible domain. All manifold are connected, and without boundary. By a \emph{closed} submanifold, we shall mean topologically closed, but not necessarily compact, embedded submanifold.

\subsection*{Organization of the paper} In \autoref{sec:preliminaries}, we recall some basic definitions and results from Finsler geometry\ifarxiv, while deferring the proof of a technical result (\autoref{lemma:smallBallPrincipalCurvature}) to the \autoref{sec:curvatureSmallBall}\fi. Then, in \autoref{sec:tubularNBD} we obtain the main results of this article. Finally, in \autoref{sec:existingLit} we discuss how this article relates to \cite{Alves2019} and \cite{InnItoNagShi19}.
\section{Preliminaries on Finsler Geometry} \label{sec:preliminaries}
This section recalls some necessary concepts and definitions in Finsler geometry and cut locus, while deferring to \cite{Shen01, Javaloyes2015, Ohta2021,BhoPra2023} for details.
\subsection{Finsler Metric} Let us recall the definition.
\begin{defn}\label{defn:FinslerMetric}
    Let $M$ be a smooth manifold, and $TM$ denotes its tangent bundle. A \textit{Finsler metric} on $M$ is a continuous function $F: TM \to \mathbb{R}$ having the following properties.
    \begin{enumerate}
        \item $F$ is smooth on $\widehat{TM} \coloneqq TM \setminus \mathbf{0}$.
        \item For any $p\in TM$, the restriction $F_p\coloneqq F\big|_{T_pM}$ is a Minkowski norm, i.e.,
        \begin{enumerate}
            \item for any $\lambda>0$ and $\mathbf{v}\in T_pM\setminus\{0\}$, we have $F_p(\lambda \mathbf{v})=\lambda F_p(\mathbf{v})$, and 
            \item for all $\mathbf{v}\in T_pM\setminus\{0\}$, the symmetric tensor $g_{\mathbf{v}}$ on $T_pM$, called the \emph{fundamental tensor}, is positive definite, where 
            \begin{displaymath}
                g_{\mathbf{v}}(\mathbf{v}_1,\mathbf{v}_2)\coloneqq \left.\dfrac{1}{2} \dfrac{\partial^2}{\partial s_1 \partial s_2} \right|_{(s_1, s_2) = (0,0)} \left(F_p(\mathbf{v} + s_1\mathbf{v}_1 + s_2\mathbf{v}_2)\right)^2.
            \end{displaymath}
        \end{enumerate}
    \end{enumerate}
    $F$ is \emph{reversible} if $F(-\mathbf{v}) = F(\mathbf{v})$ holds for all $\mathbf{v} \in \widehat{TM}$.
\end{defn}

We have the useful identity involving the fundamental tensor
\begin{equation}\label{eq:fundTensorIdentity}
    g_\mathbf{v}(\mathbf{v},\mathbf{v}) = F_p(\mathbf{v})^2, \quad \mathbf{v} \in T_p M \setminus 0.
\end{equation}
Given $\mathbf{v} \in T_p M \setminus 0$, the associated \emph{Cartan tensor} on $T_p M$ is a symmetric $3$-tensor defined as
\[C_{\mathbf{v}}(\mathbf{v}_1,\mathbf{v}_2,\mathbf{v}_3) \coloneqq \left. \frac{1}{4} \frac{\partial^3}{\partial s_1 \partial s_2 \partial s_3} \right|_{s_1=s_2=s_3=0} \left( F_p(\mathbf{v} + s_1 \mathbf{v}_1 + s_2 \mathbf{v}_2 + s_3 \mathbf{v}_3) \right)^2.\]
For each $\mathbf{v}\in T_p M \setminus 0$ and $\mathbf{u}, \mathbf{w}\in T_p M$, we have
\begin{equation}\label{eq:cartanTensorIdentity}
    C_{\mathbf{v}}(\mathbf{v}, \mathbf{u},\mathbf{w}) = C_{\mathbf{v}}(\mathbf{u}, \mathbf{v}, \mathbf{w}) = C_{\mathbf{v}}(\mathbf{u}, \mathbf{w}, \mathbf{v}) = 0.
\end{equation}
One extends the definition of fundamental tensor and the Cartan tensor to include vector fields, so that $g_V(X, Y)$ and $C_V(X, Y, Z)$ makes sense for any $V\in \Gamma \widehat{TM}$ and $X,Y, Z \in \Gamma TM$.

\subsubsection{Legendre Transformation} We have a \emph{nonlinear} fiber preserving map $\mathcal{L} : \widehat{TM} \rightarrow  \widehat{T^*M}$ given by
\begin{equation}\label{eq:legendreTransformation}
    \mathcal{L}(\mathbf{v})(\mathbf{w}) \coloneqq g_{\mathbf{v}}(\mathbf{v}, \mathbf{w}), \quad \mathbf{v}\in T_p M \setminus 0, \; \mathbf{w} \in T_p M.
\end{equation}
$\mathcal{L}$ is called the \emph{Legendre transformation} associated to $(M,F)$. It follows that $\mathcal{L}$ is a $C^\infty$-diffeomorphism. We extend $\mathcal{L}$ to all of $TM$ by setting $\mathcal{L}(0) = 0$. The extension $\mathcal{L}: TM \rightarrow T^*M$ is then a fiber-preserving \emph{homeo}morphism.

\subsubsection{Chern Connection} Unlike the Levi-Civita connection in the Riemannian context, there are several canonically defined connections on Finsler manifold. In this article, we consider the Chern connection.

\begin{defn}\label{defn:chernConnection}\cite{Rademacher2004, Javaloyes2014}
    For each $V \in \Gamma \widehat{TM}$, we have a unique affine connection \[\nabla^V: \Gamma TM \otimes \Gamma TM \rightarrow \Gamma TM,\] called the \emph{Chern connection}, satisfying the following conditions for any $X, Y, Z \in \Gamma TM$.
    \begin{itemize}
        \item $\nabla^V_X Y - \nabla^V_Y X = [X, Y]$.
        \item $X (g_V(Y,Z)) = g_V(\nabla^V_X Y, Z) + g_V(Y, \nabla^V_X Z) + 2 C_V(\nabla^V_X V, Y, Z)$.
    \end{itemize}
\end{defn}

The value for $\nabla^V_X Y|_p$ depends only on the values of $V(p), X(p)$ and the values of $Y$ on a curve with initial velocity $X(p)$. The associated \emph{curvature tensor} is given by
\begin{equation}\label{eq:curvatureTensor}
    R^V(X,Y) Z \coloneqq \nabla^V_X \nabla^V_Y Z - \nabla^V_Y \nabla^V_X Z - \nabla^V_{[X,Y]} Z, \quad X,Y,Z \in \Gamma TM,
\end{equation}
which is skew-symmetric: $R^V(X,Y) = -R^V(Y,X)$. For the closely related concept of Chern curvature, we refer to \cite{Javaloyes2014a,Javaloyes2020}. 

\begin{defn}\label{defn:flagCurvature} \cite{Rademacher2004}
    Given $V \in \Gamma \widehat{TM}$, the \emph{flag curvature} of a $2$-plane field $\sigma \in \Gamma \textrm{Gr}_2 TM$ containing $V$, is given by
\begin{equation}\label{eq:flagCurvature}
    K^V(\sigma) \coloneqq \frac{g_V(R^V(V,W)W, V)}{g_V(V, V) g_V(W, W) - g_V(V, W)^2}, 
\end{equation}
where $W \in \Gamma \widehat{TM}$ is such that $\sigma = \mathrm{Span}\left\langle V, W \right\rangle$. 
\end{defn}

\subsubsection{Geodesics}
Let us recall the length and energy functionals defined on $\mathcal{P} = \mathcal{P}([a,b])$, the space of piecewise smooth paths $\gamma : [a,b] \rightarrow M$, given by
\[L(\gamma) \coloneqq \int_a^b F(\dot \gamma(t)) dt, \qquad E(\gamma)\coloneqq \frac{1}{2}\int_a^b F(\dot \gamma(t))^2 dt.\]
A piecewise $C^1$ curve $\gamma : [a,b] \rightarrow  M$ is called a \emph{geodesic} if it is a critical point of the energy functional in the variational sense. Geodesics are the solution to a second order ordinary differential equation, called the \emph{geodesic equation} \cite[Eq. 3.15]{Ohta2021}, and hence, they are always smooth. Furthermore, given a vector $\mathbf{v} \in T_p M$, we have a unique maximal geodesic 
\begin{equation}\label{eq:uniqueMaximalGeodesic}
    \gamma_{\mathbf{v}} : [0,\ell] \rightarrow  M
\end{equation}
satisfying the initial value conditions $\gamma(0) = p$ and $\dot \gamma(0) = \mathbf{v}$.

A Finsler manifold $(M, F)$ is said to be \emph{forward complete} if $\gamma_{\mathbf{v}}$ is defined for all time $[0, \infty)$. We say $(M, F)$ is \emph{backward complete} if the reverse Finsler metric $\bar{F}$ is forward complete. The \emph{exponential map} is defined as
\begin{equation}\label{eq:exponentialMap}
    \begin{aligned}
        \exp : \mathcal{O} \subset TM &\rightarrow M \\
        \mathbf{v} &\mapsto \gamma_{\mathbf{v}}(1),
    \end{aligned}
\end{equation}
where $\mathcal{O}$ is the maximal open set of $TM$ containing the $0$-section, for which $\gamma_{\mathbf{v}}(1)$ is defined. Clearly, $F$ is complete precisely when $\mathcal{O} = TM$. It follows from the theory of ordinary differential equation that the exponential map is smooth on $\mathcal{O} \cap \widehat{TM}$, but only $C^1$ on $\mathcal{O}$; see \cite[Section 5.3]{Bao2000} for details. Unless explicitly stated, we have not assumed any completeness throughout this article. Whenever we consider a geodesic, it is understood that we only consider it in the domain where it is defined.

A geodesic is locally distance minimizing, where the \emph{Finsler distance} between points $p, q \in M$ is defined as
\begin{equation}\label{eq:finslerDistance}
    d(p, q) = \inf \left\{ L(\gamma) \;\middle|\; \gamma \in \mathcal{P}, \gamma(a) = p, \gamma(b) = q \right\}.
\end{equation}
Unless $F$ is reversible, the distance function is asymmetric (i.e., it is a \emph{pseudo}metric), although it still induces the same topology as that of the underlying manifold.

\begin{defn}\label{defn:minimizer}
    A geodesic $\gamma : [0,\ell] \rightarrow  M$ is said to be a \emph{global distance minimizer} (or simply a \emph{minimizer}) if $\gamma$ is unit-speed and $d(\gamma(0), \gamma(\ell)) = \ell = L(\gamma)$.
\end{defn}

Given a geodesic $\gamma : [0, \ell] \rightarrow M$, the reversed curve $\bar{\gamma} : [0, \ell] \rightarrow M$ given by $\bar{\gamma}(t) = \gamma(\ell - t)$ need not be a geodesic. Nevertheless, $\bar{\gamma}$ is a geodesic with respect to the reverse Finsler metric $\bar{F}$ \cite[Section 2.5]{Ohta2021}. It follows from the Hopf-Rinow theorem \cite[Theorem 3.21]{Ohta2021}, that whenever $F$ is forward (or backward) complete, given any two points $p, q \in M$ there exists a minimizer with respect to $F$, and a possibly distinct minimizer with respect to $\bar{F}$, joining $p$ to $q$.

\subsection{Submanifolds in Finsler Manifolds}
Given a submanifold $N$ of a Finsler manifold $(M, F)$, the normal cone bundle is considered as the natural replacement for normal bundles.

\begin{defn}\label{defn:normalCone}
    Given a submanifold $N \subset M$, the set $$\nu_p = \nu_p(N) = \big\{ \mathbf{v} \in T_p M \setminus \left\{ \mathbf{0} \right\} \;\big|\; g_{\mathbf{v}}(\mathbf{v},\mathbf{w}) = 0 \; \forall \mathbf{w} \in T_p N\big\} \cup \left\{ \mathbf{0} \right\}$$
    is called the \emph{normal cone} of $N$ at $p \in N$. The set $\nu = \nu(N) = \cup_{p \in N} \nu_p(N)$ is called the \emph{normal cone bundle} of $N$. The \emph{unit normal cone bundle} of $N$ is denoted as $S(\nu) = \cup_{p \in N} S(\nu_p)$, where $S(\nu_p) \coloneqq \left\{ \mathbf{v} \in \nu_p \;\middle|\; F_p(\mathbf{v}) = 1 \right\}$.
\end{defn}

The normal bundle $\nu(N)$ is \emph{not} a vector bundle in general. For any $\mathbf{n} \in \nu_p$, we have $\lambda \mathbf{n} \in \nu_p$ for $\lambda \ge 0$, i.e., each fiber $\nu_p$ is cone-like. Thus, in general, $\nu$ is not smooth as we can possibly have singularity at the $0$-section $\mathbf{0}$. The \emph{slit} normal cone bundle is defined as $\hat{\nu} \coloneqq \nu \setminus \mathbf{0}$. The Legendre transformation $\mathcal{L} : TM \rightarrow T^*M$ maps $\nu$ homeomorphically onto the annihilator bundle of $TN$, which is a genuine smooth vector bundle. As a consequence, $\nu$ is only a closed \emph{topological} submanifold of $TM$, with dimension $\dim N + \codim N = \dim M$. On the other hand, $\hat{\nu}$ (resp. $S(\nu)$) is a \emph{smooth} submanifold of $TM$ of dimension $\dim M$ (resp. $\dim M - 1$).  Note that for any $\mathbf{n} \in \hat{\nu}$, we can define a smooth local extension $\tilde{\mathbf{n}} \in \Gamma \hat{\nu}$ of $\mathbf{n}$ via $\mathcal{L}$.

\subsubsection{Principal Curvature}
Given a vector $0 \ne \mathbf{n} \in T_p M$ at some $p \in N$, we have a canonical decomposition 
\[T_p M = T_p N \oplus \left( T_p N  \right)^{\perp_{g_{\mathbf{n}}}},\]
where $\left( T_p N \right)^{\perp_{g_{\mathbf{n}}}} = \left\{ \mathbf{v} \in T_p M \;\middle|\; g_{\mathbf{n}}(\mathbf{v},\mathbf{w}) = 0 \; \forall \mathbf{w}\in T_p N \right\}$. Any $\mathbf{v} \in T_p M$ then has a canonical decomposition 
\begin{equation}\label{eq:canonicalDecomposition}
    \mathbf{v} = \mathbf{v}^{\top_{\mathbf{n}}} + \mathbf{v}^{\perp_{\mathbf{n}}}.
\end{equation}
Note that $\mathbf{n} \in \left( T_p N \right)^{\perp_{g_{\mathbf{n}}}}$ if and only if $\mathbf{n} \in \hat{\nu}_p \coloneqq  \nu_p \setminus \left\{ 0 \right\}$. 
\begin{defn}\label{defn:secondFundamentalForm}
    Given $\mathbf{n} \in \hat{\nu}_p$, the \emph{second fundamental form} of $N$ at $p$ along $\mathbf{n}$ is defined as 
    \begin{equation}\label{eq:secondFundForm}
        \begin{aligned}
            \Pi^{\mathbf{n}} : T_p N \otimes T_p N &\rightarrow \left( T_p N \right)^{\perp_{g_{\mathbf{n}}}} \\
            \mathbf{x} \otimes \mathbf{y} &\mapsto \left( \nabla^{\mathbf{n}}_{X} Y \middle|_p \right)^{\perp_{\mathbf{n}}},
        \end{aligned}
    \end{equation}
    where $X, Y \in \Gamma TM$ are some arbitrary (local) extensions of $\mathbf{x}, \mathbf{y}$, which are tangent to $N$ at points of $N$. 
\end{defn}

\begin{prop}\label{prop:secondFundFormTensor}
    The second fundamental form is a symmetric $2$-tensor, independent of any choice of extensions.
\end{prop}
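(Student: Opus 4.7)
The plan is to adapt the standard Riemannian torsion-free argument, with care taken for the fact that the Chern connection depends on a reference vector field $V \in \Gamma\hat{\nu}$. First, I would extend $\mathbf{n}$ to a smooth local section $V = \tilde{\mathbf{n}} \in \Gamma\hat{\nu}$ via the Legendre transformation, as guaranteed at the end of \autoref{defn:normalCone}. By the remark following \autoref{defn:chernConnection}, the expression $\nabla^V_X Y|_p$ depends on the reference slot only through $V(p) = \mathbf{n}$, so the choice of extension of $\mathbf{n}$ is immaterial, and $(\nabla^{\mathbf{n}}_X Y|_p)^{\perp_{\mathbf{n}}}$ is an unambiguously defined object once $V$ has been fixed.

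For tensoriality, linearity in the first argument is immediate since the Chern connection is $C^\infty$-linear in its lower slot, so $\nabla^V_X Y|_p$ depends on $X$ only through $X(p) = \mathbf{x}$. Tensoriality in the second argument comes from the Leibniz rule
\[\nabla^V_X(fY)\big|_p = \mathbf{x}(f)\,Y(p) + f(p)\,\nabla^V_X Y\big|_p,\]
combined with the crucial observation that $Y(p) = \mathbf{y} \in T_p N$, whose $\perp_{\mathbf{n}}$-component vanishes by \eqref{eq:canonicalDecomposition}. Projecting then kills the derivative term, yielding
\[\bigl(\nabla^V_X(fY)\big|_p\bigr)^{\perp_{\mathbf{n}}} = f(p)\bigl(\nabla^V_X Y\big|_p\bigr)^{\perp_{\mathbf{n}}},\]
so the result depends only on the pointwise value $\mathbf{y}$. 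Independence of the extension $Y$ follows because any two $N$-tangent extensions of $\mathbf{y}$ differ by a vector field vanishing at $p$, and independence of the extension of $\mathbf{x}$ is automatic from the lower-slot tensoriality.

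For symmetry, I would invoke the torsion-free identity $\nabla^V_X Y - \nabla^V_Y X = [X, Y]$ from \autoref{defn:chernConnection}. Since $X$ and $Y$ are tangent to $N$ along $N$, the Lie bracket $[X,Y]|_p$ lies in $T_p N$, so its $\perp_{\mathbf{n}}$-projection vanishes; projecting the torsion identity onto $(T_p N)^{\perp_{g_{\mathbf{n}}}}$ immediately gives $\Pi^{\mathbf{n}}(\mathbf{x}, \mathbf{y}) = \Pi^{\mathbf{n}}(\mathbf{y}, \mathbf{x})$. I do not expect any serious obstacles here; the only Finsler-specific point needing vigilance is the confirmation that $\nabla^V$ depends on $V$ only pointwise, which lets us treat the extension $\tilde{\mathbf{n}}$ as an auxiliary device that never introduces spurious terms. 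Modulo this, the argument mirrors the Riemannian case verbatim.
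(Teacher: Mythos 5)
Your proposal is correct and follows essentially the same route as the paper's proof: symmetry via the torsion-free identity (using that the Lie bracket of $N$-tangent fields is $N$-tangent), tensoriality in the first slot from the $C^\infty$-linearity of the Chern connection in its lower argument, and tensoriality in the second slot from the Leibniz rule combined with the vanishing of the $\perp_{\mathbf{n}}$-component of $T_pN$-valued terms. The only cosmetic difference is that the paper obtains tensoriality in the second slot by invoking the already-established symmetry, whereas you derive it directly from the Leibniz computation; both are immediate and equivalent.
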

\ifarxiv \ifdraft {\color{blue} \fi
\begin{proof}
    Let us first show that the expression $\left( \nabla^{\mathbf{n}}_X Y \middle|_p \right)^{\perp_{\mathbf{n}}}$ is symmetric in $X, Y$. Indeed, since $\nabla$ is torsion free, we have
    \[\left( \nabla^{\mathbf{n}}_X Y \middle|_p \right)^{\perp_{\mathbf{n}}} - \left( \nabla^{\mathbf{n}}_Y X \middle|_p \right)^{\perp_{\mathbf{n}}} = \left( [X, Y]_p \right)^{\perp_{\mathbf{n}}}.\]
    Since $X, Y$ are tangent to $N$ at points of $N$, so is their Lie bracket, and in particular, $[X, Y]_p \in T_p N$. But then, $\left( [X, Y]_p \right)^{\perp_{\mathbf{n}}} = 0$, proving the symmetry. Clearly, the value of $\nabla^{\mathbf{n}}_X Y$ at $p$ depends on $\mathbf{x} = X(p)$, and hence by the above symmetry, on $\mathbf{y} = Y(p)$. Lastly, it is easy to see that $\Pi^{\mathbf{n}}$ is $C^\infty(N)$-linear in the first variable, and by symmetry, in the second variable as well. Thus, $\Pi^{\mathbf{n}}$ is a symmetric $2$-tensor on $T_p N$, taking values in $\left( T_p N \right)^{\perp_{g_{\mathbf{n}}}}$.
\end{proof}
\ifdraft } \fi \fi

Since $g_{\mathbf{n}}$ is nondegenerate on $T_p N$, we can now define the following.

\begin{defn}\label{defn:shapeOperator}
    Given $0 \ne \mathbf{n} \in \nu_p(N)$, the \emph{shape operator} of $N$ at $p$ along $\mathbf{n}$ is a linear map $A_{\mathbf{n}} : T_p N \rightarrow T_p N$ determined by the equation
    \begin{equation}\label{eq:shapeOperator}
        g_{\mathbf{n}} \left( A_{\mathbf{n}} \mathbf{x}, \mathbf{y}\right) = g_{\mathbf{n}} \left( \mathbf{n}, \Pi^{\mathbf{n}}(\mathbf{x}, \mathbf{y}) \right), \qquad \mathbf{x},\mathbf{y} \in T_p N.
    \end{equation}
\end{defn}

\begin{prop}\label{prop:shapeOperator} \cite[Lemma 2.9]{BhoPra2024}
    For any $0 \ne \mathbf{n} \in \nu_p$ and $\mathbf{x} \in T_p N$, we have 
    \[A_{\mathbf{n}}(\mathbf{x}) = - \left( \nabla^{\mathbf{n}}_{\mathbf{x}} \tilde{\mathbf{n}} \middle|_p \right)^{\top_{\mathbf{n}}},\]
    where $\tilde{\mathbf{n}} \in \Gamma \hat{\nu}$ is an arbitrary local extension of $\mathbf{n}$.
\end{prop}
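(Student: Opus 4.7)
My plan is to verify the identity by pairing both sides against an arbitrary $\mathbf{y}\in T_pN$, using the defining relation \eqref{eq:shapeOperator} for $A_{\mathbf{n}}$ together with the (almost) metric-compatibility of the Chern connection from \autoref{defn:chernConnection}. I would fix local extensions $X,Y\in\Gamma TM$ of $\mathbf{x},\mathbf{y}$ that remain tangent to $N$ along $N$, and a local extension $\tilde{\mathbf{n}}\in \Gamma \hat{\nu}$ of $\mathbf{n}$.

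The first step is to simplify the right-hand side of \eqref{eq:shapeOperator}. Since $\mathbf{n}\in (T_pN)^{\perp_{g_{\mathbf{n}}}}$, the decomposition \eqref{eq:canonicalDecomposition} gives $g_{\mathbf{n}}(\mathbf{n},\mathbf{z})=g_{\mathbf{n}}(\mathbf{n},\mathbf{z}^{\perp_{\mathbf{n}}})$ for every $\mathbf{z}\in T_pM$; applying this to $\mathbf{z}=\nabla^{\mathbf{n}}_X Y|_p$ strips off the projection hidden in $\Pi^{\mathbf{n}}$ and yields
\[g_{\mathbf{n}}(A_{\mathbf{n}}\mathbf{x},\mathbf{y}) \;=\; g_{\mathbf{n}}\bigl(\mathbf{n},\,\nabla^{\mathbf{n}}_X Y|_p\bigr).\]

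Next, I would insert the tuple $(V,X,Y,Z)=(\tilde{\mathbf{n}},\,X,\,\tilde{\mathbf{n}},\,Y)$ into the compatibility identity of \autoref{defn:chernConnection}, obtaining
\[X\bigl(g_{\tilde{\mathbf{n}}}(\tilde{\mathbf{n}},Y)\bigr) \;=\; g_{\tilde{\mathbf{n}}}(\nabla^{\tilde{\mathbf{n}}}_X \tilde{\mathbf{n}},\,Y) + g_{\tilde{\mathbf{n}}}(\tilde{\mathbf{n}},\,\nabla^{\tilde{\mathbf{n}}}_X Y) + 2\,C_{\tilde{\mathbf{n}}}(\nabla^{\tilde{\mathbf{n}}}_X \tilde{\mathbf{n}},\,\tilde{\mathbf{n}},\,Y).\]
The Cartan term vanishes identically by \eqref{eq:cartanTensorIdentity}, because $\tilde{\mathbf{n}}$ sits in the middle slot. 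The left-hand side also vanishes at $p$: along $N$, the section $\tilde{\mathbf{n}}$ takes values in $\hat{\nu}$ and $Y$ is tangent to $N$, so $g_{\tilde{\mathbf{n}}}(\tilde{\mathbf{n}},Y)\equiv 0$ on $N$; since $X(p)=\mathbf{x}\in T_pN$, this derivative at $p$ can be computed along a curve in $N$ and is therefore zero. Evaluating at $p$ then produces
\[0 \;=\; g_{\mathbf{n}}\bigl(\nabla^{\mathbf{n}}_{\mathbf{x}}\tilde{\mathbf{n}}|_p,\,\mathbf{y}\bigr) + g_{\mathbf{n}}\bigl(\mathbf{n},\,\nabla^{\mathbf{n}}_X Y|_p\bigr).\]

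Combining with the first step, and noting that $g_{\mathbf{n}}(\nabla^{\mathbf{n}}_{\mathbf{x}}\tilde{\mathbf{n}}|_p,\mathbf{y}) = g_{\mathbf{n}}\bigl((\nabla^{\mathbf{n}}_{\mathbf{x}}\tilde{\mathbf{n}}|_p)^{\top_{\mathbf{n}}},\mathbf{y}\bigr)$ because $\mathbf{y}\in T_pN$ is $g_{\mathbf{n}}$-orthogonal to the $\perp_{\mathbf{n}}$-part, the nondegeneracy of $g_{\mathbf{n}}$ on $T_pN$ forces the claimed equality $A_{\mathbf{n}}(\mathbf{x})=-\bigl(\nabla^{\mathbf{n}}_{\mathbf{x}}\tilde{\mathbf{n}}|_p\bigr)^{\top_{\mathbf{n}}}$. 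The main subtlety I anticipate is the Cartan correction term in the Finslerian compatibility formula, which is the one genuine departure from the Riemannian proof; the particular configuration above is engineered precisely so that the symmetry \eqref{eq:cartanTensorIdentity} kills it outright. A minor secondary point is checking that $\nabla^{\mathbf{n}}_{\mathbf{x}}\tilde{\mathbf{n}}|_p$ depends only on $\tilde{\mathbf{n}}$ along a curve in $N$ with initial velocity $\mathbf{x}$, which is built into the pointwise/curvewise nature of the Chern connection recorded just after \autoref{defn:chernConnection}, making the right-hand side manifestly independent of the extension.
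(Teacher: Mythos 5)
Your proof is correct. The paper itself gives no proof of this proposition and simply cites \cite[Lemma 2.9]{BhoPra2024}, so there is nothing in the source to compare against line by line; what you have written is the natural Weingarten-type argument. Briefly checking: Step 1 legitimately strips the $\perp_{\mathbf{n}}$-projection because $\mathbf{n}\in (T_pN)^{\perp_{g_{\mathbf{n}}}}$, so $g_{\mathbf{n}}(\mathbf{n},\cdot)$ annihilates $T_pN$; in Step 2 the left-hand side $X\bigl(g_{\tilde{\mathbf{n}}}(\tilde{\mathbf{n}},Y)\bigr)$ vanishes at $p$ because the function is identically zero along $N$ and $\mathbf{x}\in T_pN$, and the Cartan correction $C_{\tilde{\mathbf{n}}}(\nabla^{\tilde{\mathbf{n}}}_X\tilde{\mathbf{n}},\tilde{\mathbf{n}},Y)$ dies by \eqref{eq:cartanTensorIdentity} since $\tilde{\mathbf{n}}$ occupies a non-reference slot; the final passage from the pairing identity to the vector identity correctly uses nondegeneracy of $g_{\mathbf{n}}$ on $T_pN$ together with the observation that $\mathbf{y}\in T_pN$ kills the $\perp_{\mathbf{n}}$-component. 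You also correctly note that the argument itself certifies independence of the extension $\tilde{\mathbf{n}}$, since the left-hand side $g_{\mathbf{n}}(A_{\mathbf{n}}\mathbf{x},\mathbf{y})$ manifestly does not depend on it. No gaps.
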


Note that in \cite{BhoPra2024}, the second fundamental form was defined with a negative sign. As $\Pi^\mathbf{n}$ is symmetric, it follows from \autoref{eq:shapeOperator} that $A_{\mathbf{n}}$ is self-adjoint with respect to $g_{\mathbf{n}}$. Consequently, $A_{\mathbf{n}}$ is orthogonally diagonalizable over the reals.

\begin{defn}\label{defn:principalCurvature}
    Given $0 \ne \mathbf{n} \in \nu_p(N)$, the eigenvalues of $A_{\mathbf{n}}$ are called the \emph{principal curvatures} of $N$ at $p$ along $\mathbf{n}$. The \emph{absolute principal curvature} of $N$ at $p$ along $\mathbf{n}$ is defined as $\max_{1 \le i \le \dim N} \left\lvert \kappa_i \right\rvert$, where $\kappa_i$ are the principal curvatures.
\end{defn}

Let $\kappa$ be some eigenvalue of $A_{\mathbf{n}}$, and $0 \ne \mathbf{v} \in T_p N$ be an eigenvector corresponding to $\kappa$. Then, 
\begin{equation}\label{eq:eigenValue}
    g_{\mathbf{n}}(A_{\mathbf{n}} \mathbf{v}, \mathbf{v}) = \kappa g_{\mathbf{n}}(\mathbf{v}, \mathbf{v}) \Rightarrow \kappa = \frac{g_{\mathbf{n}}(A_{\mathbf{n}} \mathbf{v}, \mathbf{v})}{g_{\mathbf{n}}(\mathbf{v}, \mathbf{v})}.
\end{equation}
Let us also note the following result from linear algebra, which is clearly applicable to the shape operators of two hypersurfaces, that are tangent to each other at a point, along a common normal vector.
\begin{prop}\label{prop:eigenValuePositiveDefinite}
    Let $A, B$ be two self-adjoint operators on a finite dimensional inner product space $\left( V, \left\langle  \right\rangle \right)$. Suppose each of the eigenvalues of $A$ strictly exceeds all the eigenvalues of $B$. Then, $A - B$ is positive definite.
\end{prop}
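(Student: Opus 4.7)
The plan is to reduce the claim to the classical Rayleigh quotient (min–max) characterization of extremal eigenvalues of self-adjoint operators on a finite dimensional inner product space. The hypothesis, that every eigenvalue of $A$ strictly exceeds every eigenvalue of $B$, gives in particular the key inequality $\alpha > \beta$, where $\alpha$ denotes the smallest eigenvalue of $A$ and $\beta$ the largest eigenvalue of $B$. This uniform spectral gap is exactly what is needed to exhibit positive definiteness of the difference.

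Concretely, I would first invoke the spectral theorem: both $A$ and $B$, being self-adjoint on a finite dimensional inner product space, admit orthonormal eigenbases with real eigenvalues. Expanding any vector $v \in V$ in these bases and using that squared coefficients are nonnegative immediately yields the Rayleigh bounds
\[
\langle A v, v\rangle \;\geq\; \alpha \,\langle v, v\rangle
\qquad \text{and} \qquad
\langle B v, v\rangle \;\leq\; \beta \,\langle v, v\rangle
\]
for every $v \in V$. Subtracting the two inequalities then produces
\[
\langle (A - B) v, v\rangle \;\geq\; (\alpha - \beta)\,\langle v, v\rangle,
\]
and, since $\alpha - \beta > 0$ by hypothesis, the right-hand side is strictly positive whenever $v \neq 0$. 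As $A - B$ is itself self-adjoint, this is precisely the definition of positive definiteness.

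There is no substantive obstacle here; the statement is essentially a rephrasing of the Rayleigh principle. The only point requiring some care is parsing the hypothesis correctly, in order to extract the single scalar inequality $\min\mathrm{spec}(A) > \max\mathrm{spec}(B)$, after which the Rayleigh characterization closes the argument in two lines.
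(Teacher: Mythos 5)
Your proof is correct and follows essentially the same route as the paper's: both expand $\mathbf{v}$ in orthonormal eigenbases via the spectral theorem and bound the two quadratic forms by extremal eigenvalues before subtracting. The only cosmetic difference is that you name $\alpha = \min\operatorname{spec}(A)$ and $\beta = \max\operatorname{spec}(B)$ and compare them directly, whereas the paper compares both $\langle A\mathbf{v},\mathbf{v}\rangle$ and $\langle B\mathbf{v},\mathbf{v}\rangle$ to the single threshold $\tau = \max\operatorname{spec}(B)$; the underlying Rayleigh-quotient argument is identical.
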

\ifarxiv \ifdraft {\color{blue} \fi
\begin{proof}
    Let us fix two orthonormal bases $V = \mathrm{Span}\left\langle \mathbf{u}_1,\dots, \mathbf{u}_n \right\rangle = \mathrm{Span}\left\langle \mathbf{v}_1,\dots, \mathbf{v}_n \right\rangle$, where $A \mathbf{u}_i = \kappa_i \mathbf{u}_i$ and $B \mathbf{v}_i = \tau_i \mathbf{v}_i$. By the hypothesis, $\kappa_i > \tau \coloneqq \max_{1 \le i \le n} \tau_j $ for all $i$. Now, for any $\mathbf{v} \in V$ with $\left\lVert \mathbf{v} \right\rVert \coloneqq \sqrt{\left\langle \mathbf{v}, \mathbf{v} \right\rangle} = 1$, let us write $\mathbf{v} = \sum_i a^i \mathbf{u}_i = \sum_i b^j \mathbf{v}_i$. It follows that, $1 = \left\lVert \mathbf{v} \right\rVert^2 = \sum_i \left( a^i \right)^2 = \sum_i \left( b^i \right)^2$. Then, we have 
    \[\left\langle A\mathbf{v}, \mathbf{v} \right\rangle = \sum_{i,j} a^i a^j \left\langle A \mathbf{u}_i, \mathbf{u}_j \right\rangle = \sum_{i,j} a^i a^j \left\langle \kappa_i \mathbf{u}_i, \mathbf{u}_j \right\rangle = \sum_i \left( a^i \right)^2 \kappa_i > \tau \sum_i \left( a^i \right)^2 = \tau,\]
    and similarly 
    \[\left\langle B \mathbf{v}, \mathbf{v}\right\rangle = \sum_i \left( b^i \right)^2 \tau_i \le \tau \sum_i \left( b^i \right)^2 = \tau.\] Then, we have 
    \[\left\langle (A-B)\mathbf{v}, \mathbf{v} \right\rangle = \left\langle A \mathbf{v}, \mathbf{v} \right\rangle - \left\langle B \mathbf{v}, \mathbf{v} \right\rangle = \sum_i \left( a^i \right)^2 \kappa_i - \sum_i \left( b^i \right)^2 \tau_i > 0.\]
    Since $\mathbf{v} \in V$ is an arbitrary unit vector, we have $A - B$ is positive definite.
\end{proof}
\ifdraft } \fi \fi

\subsubsection{\texorpdfstring{$N$}{N}-Geodesics}
Given a submanifold $N \subset M$, we consider the subspace of piecewise smooth paths $\mathcal{P}_N = \mathcal{P}_N([a,b]) = \{ \gamma \in \mathcal{P}([a,b]) \;|\; \gamma(a) \in N\}$ starting at $N$. Given $q \in M$, the distance from $N$ to $q$ is then defined as
    \begin{equation}\label{eq:distanceToN}
        d(N,q) \coloneqq \inf \left\{ L(\gamma) \;\middle|\; \gamma\in\mathcal{P}_N, \; \gamma(b) = q \right\}.
    \end{equation}
A piecewise $C^1$ curve $\gamma : [a,b] \rightarrow M$ in $\mathcal{P}(N)$ is called an \emph{$N$-geodesic} if $\gamma$ is a critical point of the restricted energy functional $E|_{\mathcal{P}(N)}$ in the variational sense. An $N$-geodesic $\gamma$ is called an \emph{$N$-segment} joining $N$ to $\gamma(b)$ if $\gamma$ is unit-speed, and $d(N, \gamma(b)) = b - a = L(\gamma)$. It follows from the first variation principal that an $N$-geodesic $\gamma$ has initial velocity in the normal cone of $N$. One defines the \emph{normal exponential map} $\exp^\nu: \nu(N) \rightarrow  M$ as the restriction of the exponential map to the cone bundle $\nu(N)$. Every unit-speed $N$-geodesic is then of the form $\gamma_{\mathbf{v}}(t) \coloneqq \exp^\nu(t \mathbf{v})$ for some $\mathbf{v} \in S(\nu)$. 

\subsection{Cut Locus of a Submanifold}
The \emph{cut locus of a point} $p \in M$ is the set consisting of all $q \in M$ such that there exists a minimizer from $p$ to $q$, any extension of which fails to be distance minimizing from $p$. We denote the cut locus of $p$ by $\mathrm{Cu}(p)$. Generalizing this notion to an arbitrary submanifold, we have the following definition.

\begin{defn}\label{defn:cutLocusN}
    Given a submanifold $N \subset M$, the \emph{cut locus} of $N$ consists of points $q \in M$ such that there exists an $N$-segment joining $N$ to $q$, whose extension fails to be an $N$-segment. Given $\mathbf{v} \in S(\nu)$, the \emph{cut time} of $\mathbf{v}$ is defined as 
    \begin{equation}\label{eq:cutTime}
        \rho(\mathbf{v}) = \rho_N(\mathbf{v}) \coloneqq \sup \left\{ t \;\middle|\; d(N, \gamma_{\mathbf{v}}(t \mathbf{v})) = t \right\},
    \end{equation}
    where $\gamma_{\mathbf{v}}(t) = \exp^\nu(t \mathbf{v})$ is the unique $N$-geodesic with initial velocity $\mathbf{v}$.
\end{defn}

Note that we allow $\rho$ to take the value $\infty$. The map $\rho: S(\nu ) \rightarrow  [0, \infty]$ is continuous, assuming at least one of the following holds \cite[Theorem 4.7]{BhoPra2023},
\begin{equation}\label{eq:H}\tag{$\mathsf{H}$}
    \begin{aligned} 
        &\text{(a) either $N$ is compact and $F$ is forward complete, or,} \\
        &\text{(b) $F$ is both forward and backward complete.}
    \end{aligned}
\end{equation}
Note that in \cite[Remark 3.11]{BhoPra2023} it was observed that in the absence of the hypothesis (\hyperref[eq:H]{H}), the distance to a point from a closed submanifold may not even be achieved on the submanifold.

The \emph{tangent cut locus} of $N$ is defined as 
\begin{equation}\label{eq:tangentCutLocus}
    \widetilde{\mathrm{Cu}}(N) \coloneqq \left\{ \rho(\mathbf{v}) \mathbf{v} \;\middle|\; \mathbf{v}\in S(\nu), \; \rho(\mathbf{v}) \ne \infty \right\} \subset \nu.
\end{equation}
It follows from \autoref{defn:cutLocusN} that every $x \in \mathrm{Cu}(N)$ can be written as $x = \exp^\nu(\rho(\mathbf{v}) \mathbf{v})$ for some $\mathbf{v} \in S(\nu)$. If $F$ is assumed to be forward complete, then we get the identity $\mathrm{Cu}(N) = \exp^\nu(\widetilde{\mathrm{Cu}}(N))$. To see the necessity of completeness, simply consider $N$ as a great circle in the round $2$-sphere with one of the cut points removed. In this case, the cut time is constant, $\mathrm{Cu}(N)$ consists of a point, $\widetilde{\mathrm{Cu}}(N)$ consists of two copies of $S^1$, but $\exp^\nu(\widetilde{\mathrm{Cu}}(N))$ is not defined.

\subsubsection{Separating Points and Focal Points of a Submanifold}
Let us note a useful characterization of $\mathrm{Cu}(N)$, via the notion of separating set, originally called the \emph{several geodesics set} in \cite{Wol79}. In the terminology of \cite{Bishop77}, this is also known as the set of \emph{ordinary} cut points.
\begin{defn}\label{defn:separtingSet}
    Given a submanifold $N \subset M$, a point $p \in M$ is said to be a \emph{separating point} of $N$ if there exist two distinct $N$-segments joining $N$ to $p$. The collection of all separating points of $N$ is called the \emph{separating set} of $N$, denoted $\mathrm{Se}(N)$.
\end{defn}

We have $\mathrm{Se}(N) \subset \mathrm{Cu}(N)$ \cite[Proposition 4.2]{BhoPra2023}, and in fact, under hypotheses (\hyperref[eq:H]{$\mathsf{H}$}), it follows that $\mathrm{Cu}(N) = \overline{\mathrm{Se}(N)}$ \cite[Theorem 4.8]{BhoPra2023}. We also recall the closely related notion of focal points.

\begin{defn}\label{defn:tangentFocalLocus}
    Given a submanifold $N \subset M$, a vector $\mathbf{v}\in \hat{\nu} \cap \mathcal{O}$ is said to be a \emph{tangent focal point} of $N$ if 
    \[d_{\mathbf{v}}(\exp^\nu|_{\hat{\nu} \cap \mathcal{O}}) : T_{\mathbf{v}} \left( \hat{\nu} \cap \mathcal{O} \right) = T_{\mathbf{v}} \hat{\nu} \rightarrow T_{\exp^\nu(\mathbf{v})}M\]
    is degenerate, i.e., if $\mathbf{v}$ is a critical point of $\exp^\nu|_{\hat{\nu} \cap \mathcal{O}}$. A \emph{focal point} of $N$ along an $N$-geodesic is a critical value of the normal exponential map.
\end{defn}

It follows that beyond a focal point, an $N$-geodesic can never be an $N$-segment \cite[Lemma 4.4]{BhoPra2023}. Note that although \cite{BhoPra2023} is written under the standing assumption of $F$ being forward complete, the proofs of \cite[Proposition 4.2 and Lemma 4.4]{BhoPra2023} do not require this hypothesis.

\subsection{Principal Curvatures of Small Backward Spheres}
We shall require an estimate on the principal curvatures of a small geodesic sphere. For any $p \in M$, let us define the \emph{forward} and the \emph{backward injectivity radius} at $p$ respectively as 
\begin{equation}\label{eq:injectivityRadius}
    \begin{aligned} 
        \mathrm{Inj}^{+}(p) \coloneqq \inf_{\mathbf{v}\in S(T_p M)} 
        \sup \left\{ t \;\middle|\; d(p, \exp_p(t \mathbf{v})) = t \right\},\\
        \mathrm{Inj}^{-}(p) \coloneqq \inf_{\mathbf{v} \in S(T_p M)} \sup \left\{ t \;\middle|\; d(\exp_p(t\mathbf{v}), p) = t \right\}.
    \end{aligned}
\end{equation}
It follows from the Whitehead convexity theorem \cite[Pg. 164]{Bao2000} that $\mathrm{Inj}^{\pm}(p) > 0$. We now have the following result, a proof of which can be found in \cite{Wu2016}. \ifarxiv \ifdraft {\color{blue} \fi To avoid confusion with the sign conventions, we give a complete proof in the \autoref{sec:curvatureSmallBall} \ifdraft } \fi \fi

\begin{lemma}\label{lemma:smallBallPrincipalCurvature}
    Suppose the flag curvature of $(M, F)$ is bounded from above by $\lambda$. Let $q \in M$ and choose $r > 0$ smaller than both the forward and backward injectivity radius at $q$. Then, for the backward sphere $S = S_{-}(q, r) = \left\{ x \;\middle|\; d(x, q) = r \right\}$, the principal curvatures at any point along the inward radial direction is greater than $\mathfrak{ct}_\lambda(r)$, where we have the quantity 
    \begin{equation}\label{eq:ct}
        \mathfrak{ct}_\lambda(r) = 
    \begin{cases}
        \sqrt{\lambda} \cot\left( \sqrt{\lambda} r \right), \quad \lambda > 0 \\
        \frac{1}{r}, \quad \lambda = 0 \\
        \sqrt{-\lambda} \coth\left( \sqrt{-\lambda} r \right), \quad \lambda < 0.
    \end{cases}
    \end{equation}
    In particular, as $r \to 0$, the principal curvatures of $S$, along the inward radial direction, tends to $\infty$.
\end{lemma}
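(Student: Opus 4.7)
The plan is the standard Jacobi-field / Riccati-equation method along the inward radial geodesic, adapted to the Chern connection in the Finslerian setting. I fix $x \in S \coloneqq S_-(q,r)$ and the unique unit-speed minimizer $\gamma\colon[0,r]\to M$ with $\gamma(0)=x$, $\gamma(r)=q$; such a $\gamma$ exists and is unique by the choice $r<\mathrm{Inj}^{-}(q)$. The Finsler Gauss lemma applied to the distance function $f=d(\cdot,q)$ at each $\gamma(t)$ gives $\mathcal{L}(\dot\gamma(t))=-df_{\gamma(t)}$, so $\dot\gamma(t)$ lies in the inward unit normal cone of the concentric backward sphere $S(t)\coloneqq S_-(q,r-t)$ at $\gamma(t)$, which is smooth near $\gamma(t)$ for each $t\in[0,r)$; in particular $\mathbf{n}\coloneqq\dot\gamma(0)\in\hat\nu_x(S)$. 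A tangent vector $\mathbf{x}\in T_xS$ is then the initial value of a Jacobi field $J$ along $\gamma$ coming from a variation $\gamma_s$ of $\gamma$ through unit-speed geodesics all ending at $q$ at time $r$, so $J(r)=0$ and $J(0)=\mathbf{x}$.

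Setting $\tilde{\mathbf{n}}(s)\coloneqq\dot\gamma_s(0)$ gives a local extension of $\mathbf{n}$ as a section of $\hat\nu$, and the standard variation symmetry for a family of geodesics (valid with $\nabla^{\dot\gamma}$ because the reference vector field extends canonically along the family) identifies $\nabla^{\mathbf{n}}_{\mathbf{x}}\tilde{\mathbf{n}}=J'(0)$, where $J'$ denotes the $\nabla^{\dot\gamma}$-derivative along $\gamma$. Hence by \autoref{prop:shapeOperator} one has $A_{\mathbf{n}}(\mathbf{x})=-(J'(0))^{\top_{\mathbf{n}}}$. Letting $U(t)$ denote the shape operator of $S(t)$ at $\gamma(t)$ along $\dot\gamma(t)$, the identity $U(t)J(t)=-J'(t)$ (modulo $\dot\gamma$) combined with the Finsler Jacobi equation $J''+R^{\dot\gamma}(J,\dot\gamma)\dot\gamma=0$ produces the matrix Riccati equation
\begin{equation*}
    U'(t)=U(t)^{2}+R_{\dot\gamma}(t),\qquad R_{\dot\gamma}(t)\mathbf{w}\coloneqq R^{\dot\gamma}(\mathbf{w},\dot\gamma)\dot\gamma,
\end{equation*}
acting on the $g_{\dot\gamma(t)}$-orthogonal complement of $\dot\gamma(t)$, with the singular boundary behaviour $U(t)\sim\tfrac{1}{r-t}I$ as $t\to r^{-}$, forced by the fact that all the variational geodesics pass through the common endpoint $q$.

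To finish I pick a $\nabla^{\dot\gamma}$-parallel, $g_{\dot\gamma}$-orthonormal frame of the tangential hyperplane along $\gamma$; such a frame exists because the identity $\nabla^{\dot\gamma}_{\dot\gamma}\dot\gamma=0$ kills the Cartan-tensor correction term in \autoref{defn:chernConnection}, making $\nabla^{\dot\gamma}$ genuinely $g_{\dot\gamma}$-compatible along $\gamma$. In this frame $U$ is a symmetric matrix, $R_{\dot\gamma}(t)\le\lambda I$ by the flag-curvature hypothesis and \autoref{eq:flagCurvature}, and the scalar model $u'=u^{2}+\lambda$ has the maximal solution $u(t)=\mathfrak{ct}_\lambda(r-t)$ blowing up at $t=r$. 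A matrix Riccati / Sturm comparison with the common singular data at $t=r$ then yields $U(t)\ge\mathfrak{ct}_\lambda(r-t)I$ for every $t\in[0,r)$; evaluating at $t=0$ gives the required lower bound on every eigenvalue of $A_{\mathbf{n}}$, and the strict inequality can be recovered either by a standard rigidity argument or by replacing $r$ with a slightly smaller value. The main obstacle is the Finslerian bookkeeping: the tensor $g_{\dot\gamma(t)}$ and the connection $\nabla^{\dot\gamma}$ both depend on the moving vector $\dot\gamma(t)$, so one must verify carefully that the Jacobi equation takes its usual form along $\gamma$, that the tidal operator $R_{\dot\gamma}$ is $g_{\dot\gamma}$-self-adjoint on the tangential hyperplane, and that a Chern-parallel tangential frame stays $g_{\dot\gamma}$-orthonormal; once these three checks are done, the classical Riemannian Riccati comparison argument transfers essentially verbatim.
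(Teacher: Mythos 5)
Your proposal is correct in substance but follows a genuinely different route than the paper. You work directly in the forward Finsler metric $F$: you fix the geodesic $\gamma$ running \emph{from} $x \in S$ \emph{to} $q$, realize tangent vectors of $S$ as initial values of Jacobi fields vanishing at $t=r$, and set up the matrix Riccati equation $U' = U^2 + R_{\dot\gamma}$ for the shape operators of the shrinking family of backward spheres $S_-(q,r-t)$, with singular initial data at $t=r$. The comparison with $u' = u^2 + \lambda$ then gives the bound, once you verify that the Chern connection along $\gamma$ is $g_{\dot\gamma}$-compatible (using $\nabla^{\dot\gamma}_{\dot\gamma}\dot\gamma = 0$), that a parallel tangential frame stays $g_{\dot\gamma}$-orthonormal, and that the tidal operator $R_{\dot\gamma}$ is self-adjoint and bounded above by $\lambda I$ via the flag-curvature hypothesis. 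These checks are real but routine, and your identification $\nabla^{\mathbf n}_{\mathbf x}\tilde{\mathbf n} = J'(0)$ followed by $A_{\mathbf n}(\mathbf x) = -(J'(0))^{\top_{\mathbf n}}$ via \autoref{prop:shapeOperator} is the right bridge between the shape operator and the Jacobi field.

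The paper instead passes to the \emph{reverse Finsler metric} $\bar F$. It establishes (\autoref{prop:reverseFinslerRelations}) that the fundamental tensor, Chern connection, curvature, and flag curvature of $\bar F$ are obtained from those of $F$ by replacing the reference vector $V$ with $-V$, and deduces (\autoref{cor:shapeOperatorReversedFinsler}) that $\bar A_{-\mathbf n} = -A_{\mathbf n}$. Then for the reversed geodesic $\bar\gamma(t) = \gamma(r-t)$ (a $\bar F$-geodesic from $q$ to $p$) and the Jacobi field $J$ with $J(0)=0$, $J(r)=\mathbf v$, the eigenvalue of $A_{\mathbf n}$ is expressed as the index-form quotient $\bar g_{-\mathbf n}(D^{\dot{\bar\gamma}}_{\bar\gamma}J(r), J(r))/\bar g_{-\mathbf n}(J(r),J(r))$, to which the established Finslerian Rauch comparison (Bao--Chern--Shen, Corollary 9.8.1) applies directly, yielding $\kappa \geq \mathfrak{ct}_\lambda(r)$. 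This avoids re-deriving the Riccati comparison and avoids having to discuss the shape operators of the intermediate spheres $S_-(q,r-t)$ altogether; the cost is the bookkeeping lemma relating forward and reverse quantities. Your approach is arguably more self-contained at the ODE level, but it requires you to actually prove (or at least carefully cite) the singular Riccati comparison in the Finslerian setting, which you currently leave as ``standard''; the paper sidesteps this by citing the existing Finsler Rauch theorem. Also note the paper's cited comparison gives $\kappa \geq \mathfrak{ct}_\lambda(r)$ rather than strict inequality, which is enough for the intended application; your strictness remarks are unnecessary for the same reason.
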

\section{Geometric Tubular Neighborhood of a Submanifold} \label{sec:tubularNBD}
Let $N$ be a closed submanifold of a Finsler manifold $(M, F)$. We recall the definition first.
\begin{defn}\label{defn:geometricTubularNbd}
    Given a closed submanifold $N$ of $(M, F)$ and a positive function $\epsilon : N \rightarrow (0, \infty)$, a \emph{geometric $\epsilon$-tubular neighborhood} of $N$ is the open neighborhood of the $0$-section given as
    \[\mathcal{U} = \mathcal{U}(\epsilon) \coloneqq \left\{ \mathbf{v}\in \nu_p \;\middle|\; F(\mathbf{v}) < \epsilon(p), \; p \in N \right\} \subset \nu,\]
    such that the map $\exp^\nu|_{\mathcal{U}}$ is defined and is a homeomorphism onto the image, which, furthermore, is a diffeomorphism away from the $0$-section. Without loss of generality, we shall call the image $\exp^\nu\left( \mathcal{U} \right) \subset M$ as a tubular neighborhood of $N$ as well.
\end{defn}

When the metric is Riemannian, we refer to \cite[Theorem 5.25]{Lee18} for a detailed proof of the existence of a tubular neighborhood of a closed submanifold. For a Finsler manifold, the same was originally proved in \cite{Alves2019}, see \autoref{sec:javaloyes}. In this section, we present a different approach, which is novel even for the Riemannian setup. The idea stems from the following statement.
\begin{equation}\label{eq:innamiStatement} \tag{S}
    \begin{aligned} 
        \begin{minipage}[t]{\linewidth-3cm}
            Given $q \in M \setminus N$, with $d(N, q)$ sufficiently small, the backward sphere $S_{-}(q, d(N, q)) = \left\{ x \;\middle|\; d(x, q) = d(N, q) \right\}$ intersects $N$ at a singleton.
        \end{minipage}
    \end{aligned}
\end{equation}
 This seemingly intuitive statement (see \autoref{fig:intersectionOfCurveAndSpheres}) was used without proof in \cite[Lemma 3.2]{InnItoNagShi19}. We shall see a rigorous proof of the statement (\hyperref[eq:innamiStatement]{S}) in \autoref{cor:smallBallUniqueIntersection}, which follows immediately from \autoref{thm:cutTimePositive}. In fact, in \autoref{cor:frontIsCone}, we show how \cite[Lemma 3.2 and Theorem 3.1]{InnItoNagShi19} follow immediately from the statement (\hyperref[eq:innamiStatement]{S}). See also \autoref{sec:sphereCondition} for further discussion, including \autoref{example:ellipseParallelCurve} which gives a counterexample to the above statement in low regularity.
 \begin{figure}[H]
    \centering
    \def\svgwidth{0.3\columnwidth}
    \import{./figures/}{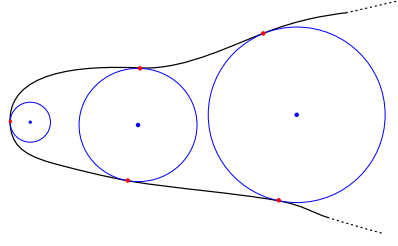}

    \caption{A sufficiently small sphere intersects the submanifold at a unique point.}
    \label{fig:intersectionOfCurveAndSpheres}
\end{figure}

\subsection{Finslerian Hessian}
Let us first recall the definition of gradient of a function in the Finsler context.
\begin{defn}\label{defn:gradient}
    Given $f : M \rightarrow \mathbb{R}$, the \emph{gradient} of $f$ is defined as the vector field $\nabla f \coloneqq \mathcal{L}^{-1}(df)$, where $\mathcal{L} : TM \rightarrow T^*M$ is the Legendre transformation.
\end{defn}
Suppose, $f : M \rightarrow \mathbb{R}$ is a submersion, i.e., $df \ne 0 $ and hence, $\nabla f$ is nonvanishing. It follows that
\begin{equation}\label{eq:gradient}
    X(f) = df(X) = \mathcal{L}(\nabla f)(X) = g_{\nabla f}(\nabla f, X), \quad X \in \Gamma TM.
\end{equation}
Let us now define the hessian. Unlike the Riemannian setup, there are several non-equivalent definitions for the hessian of a function. We follow \cite{WuXin07,Ohta2021}, which is distinct from \cite{ShenBook01}. Note that if $df|_p \ne 0$, then $\nabla f \ne 0$ in a neighborhood of $p$. 
\begin{defn}\label{defn:hessian}
    Suppose $f : M \rightarrow \mathbb{R}$ is non-singular at some $p \in M$, i.e., $d f|_p \ne 0$. Then, the \emph{hessian} of $f$ at $p$ is defined as 
    \begin{equation}\label{eq:hessian}
        \textsf{Hess}_p(f)(\mathbf{x},\mathbf{y}) \coloneqq \left. \left( XY(f) - \nabla^{\nabla f}_X Y(f) \right)\right|_{p}, \quad \mathbf{x}, \mathbf{y} \in T_p M,
    \end{equation}
    where $X, Y \in \Gamma TM$ are arbitrary extensions of $\mathbf{x}, \mathbf{y}$ respectively.
\end{defn}

\begin{prop}\label{prop:hessianTensor}
    Given $f$ with $\nabla f|_p \ne 0$, the hessian of $f$ at $p$ is a well-defined symmetric $2$ tensor on $T_p M$, independent of choice of extensions. Moreover, we have the equality 
    \begin{equation}\label{eq:hessianAlt}
        \textsf{Hess}_p(f)(\mathbf{x}, \mathbf{y}) = \left. g_{\nabla f}\left( \nabla^{\nabla f}_X \nabla f, Y \right) \right|_p,
    \end{equation}
    for arbitrary extensions $X, Y$ of $\mathbf{x}, \mathbf{y}$ respectively.
\end{prop}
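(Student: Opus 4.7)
The plan is to first establish the alternative formula \eqref{eq:hessianAlt}, from which well-definedness is immediate, and then deduce symmetry directly from the original definition. A preliminary remark: since $df|_p \ne 0$ and $\mathcal{L}^{-1}$ is a diffeomorphism on $\widehat{T^*M}$, the gradient $\nabla f$ is a smooth, nonvanishing vector field on a neighborhood of $p$, so the connections $\nabla^{\nabla f}$ appearing below are legitimately defined there.

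For \eqref{eq:hessianAlt}, I would start from the gradient identity \eqref{eq:gradient} in the form $Y(f) = g_{\nabla f}(\nabla f, Y)$ and differentiate along $X$. Applying the compatibility property of the Chern connection from \autoref{defn:chernConnection} with $V = \nabla f$ gives
$$X\bigl(g_{\nabla f}(\nabla f, Y)\bigr) = g_{\nabla f}(\nabla^{\nabla f}_X \nabla f, Y) + g_{\nabla f}(\nabla f, \nabla^{\nabla f}_X Y) + 2 C_{\nabla f}(\nabla^{\nabla f}_X \nabla f, \nabla f, Y).$$
The Cartan correction vanishes by \eqref{eq:cartanTensorIdentity}, since $\nabla f$ occupies the middle slot of $C_{\nabla f}$. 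Meanwhile $(\nabla^{\nabla f}_X Y)(f) = g_{\nabla f}(\nabla f, \nabla^{\nabla f}_X Y)$ again by \eqref{eq:gradient}. Subtracting the latter from the former, the middle term cancels and \eqref{eq:hessianAlt} drops out.

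With \eqref{eq:hessianAlt} in hand, well-definedness is automatic. By the pointwise-dependence remark following \autoref{defn:chernConnection}, $\nabla^{\nabla f}_X \nabla f\big|_p$ depends only on $X(p) = \mathbf{x}$ together with the germ of the globally given vector field $\nabla f$ at $p$; and $g_{\nabla f(p)}$ is a bilinear form on $T_pM$, so the pairing depends only on $Y(p) = \mathbf{y}$. Thus the right-hand side of \eqref{eq:hessianAlt} is a genuine bilinear form on $T_p M$, and the original expression \eqref{eq:hessian} agrees with it. For symmetry I work directly from \eqref{eq:hessian}: the torsion-free identity $\nabla^{\nabla f}_X Y - \nabla^{\nabla f}_Y X = [X,Y]$ combined with $[X,Y](f) = XY(f) - YX(f)$ makes $\textsf{Hess}_p(f)(\mathbf{x},\mathbf{y}) - \textsf{Hess}_p(f)(\mathbf{y},\mathbf{x})$ collapse to zero.

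I expect no serious obstacles. The only substantive step is the vanishing of the Cartan correction, which is precisely what allows the Riemannian-style derivation of the two equivalent forms of the hessian to survive in the Finsler setting; everything else is a routine bookkeeping of torsion-freeness and the gradient identity.
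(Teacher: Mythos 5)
Your proposal is correct and uses the same two key ingredients as the paper: torsion-freeness of the Chern connection to obtain symmetry, and the compatibility identity together with the vanishing of $C_{\nabla f}(\cdot,\nabla f,\cdot)$ via \eqref{eq:cartanTensorIdentity} to obtain \eqref{eq:hessianAlt}. The only difference is organizational: the paper first proves symmetry and $C^\infty$-linearity directly from \eqref{eq:hessian} and then derives the alternative formula, whereas you derive \eqref{eq:hessianAlt} first and read off tensoriality from it, which is a perfectly valid and slightly more economical ordering of the same argument.
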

\begin{proof}
    Since $\nabla$ is torsion free, for any $X, Y \in \Gamma TM$ we have 
    \begin{align*}
        & \left( XY(f) - \nabla^{\nabla f}_X Y (f) \right) - \left( YX(f) - \nabla^{\nabla f}_Y X (f) \right) \\
        =& \left( XY(f) - YX(f) \right) - \left( \nabla^{\nabla f}_X Y - \nabla^{\nabla f}_Y X \right)(f) \\
        =& [X, Y](f) - [X, Y](f) = 0.
    \end{align*}
    The expression is clearly $C^\infty(M)$-linear in $X$, and hence by the above symmetry, $C^\infty(M)$-linear in $Y$ as well. Thus, $\textsf{Hess}_p(f) : T_p M \odot T_p M \rightarrow \mathbb{R}$ is a symmetric $2$-tensor, which is well-defined irrespective of the choice of local extensions.

    Next, it follows from \autoref{eq:gradient} that
    \begin{align*}
        XY(f) = X g_{\nabla f}\left( \nabla f, Y \right) &= g_{\nabla f} \left( \nabla^{\nabla f}_X \nabla f, Y \right) + g_{\nabla f}\left( \nabla f, \nabla^{\nabla f}_X Y \right) \\
        &\qquad + 2 \underbrace{C_{\nabla f}\left( \nabla^{\nabla f}_X \nabla f, \nabla f, Y \right)}_0 \\
        &=g_{\nabla f} \left( \nabla^{\nabla f}_X \nabla f, Y \right) + \nabla^{\nabla f}_X Y(f)
    \end{align*}
    Hence, we have $\textsf{Hess}_p(f)(\mathbf{x}, \mathbf{y}) = \left. g_{\nabla f}\left( \nabla^{\nabla f}_X \nabla f, Y \right) \right|_p$.
\end{proof}

Suppose $f : M \rightarrow \mathbb{R}$ is a submersion near $p \in M$ with $f(p) = 0$, and denote $P = f^{-1}(0)$ as the codimension $1$ submanifold. Then for any $X \in \Gamma TP$ we have 
\[g_{\nabla f}(\nabla f, X) = df(X) = 0.\]
Consequently, $\nabla f$ is in the normal cone of $P$ on points of $P$, similar to the Riemannian gradient. In particular, $\nabla f$ as an extension of $\mathbf{n} \coloneqq \nabla f|_p$. Consider the shape operator $A_{\mathbf{n}} : T_p P \rightarrow T_p P$ of $P$ at $p$ along $\mathbf{n}$. Then for $\mathbf{x}, \mathbf{y} \in T_p P$, we have from \autoref{prop:shapeOperator} and \autoref{prop:hessianTensor}, 
    \begin{equation}\label{eq:hessianEigenValue}
        A_{\mathbf{n}} \mathbf{x} = - \left( \nabla^{\nabla f}_X \nabla f \middle|_p \right)^{\top_{\mathbf{n}}} \Rightarrow g_{\mathbf{n}}\left( A_{\mathbf{n}} \mathbf{x}, \mathbf{y} \right) = - \left. g_{\mathbf{n}} \left( \nabla^{\nabla f}_X \nabla f, \mathbf{y} \right)\right|_p = - \textsf{Hess}_p(f)(\mathbf{x}, \mathbf{y}).
    \end{equation}

\subsection{Minimizing the Distance Function to a Point from a Submanifold}
Let us first consider the case of a hypersurface.
\begin{lemma}\label{lemma:localMinima}
    Let $P$ be a codimension $1$ submanifold of $M$ and fix a unit normal vector $\mathbf{n} \in \nu_p(P)$. For some $\epsilon > 0$, with $\epsilon \mathbf{n} \in \mathcal{O}$, consider the point $q = \exp^\nu(\epsilon \mathbf{n})$, and the distance function $f_\epsilon(x) = d(x, q)$. Then, there exists $\epsilon > 0$ sufficiently small so that $f_\epsilon|_{P}$ attains a strict local minima at $p$.
\end{lemma}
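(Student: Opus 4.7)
My approach is to compare $P$ against the backward geodesic sphere of radius $\epsilon$ around $q$, exploiting the curvature blowup of Lemma~\ref{lemma:smallBallPrincipalCurvature} as $\epsilon \to 0$.

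For $\epsilon > 0$ sufficiently small---specifically $\epsilon < \min\{\mathrm{Inj}^+(p), \mathrm{Inj}^-(q)\}$ with $\epsilon\mathbf{n} \in \mathcal{O}$---the radial geodesic $t \mapsto \exp^\nu(t\mathbf{n})$ from $p$ to $q$ is a forward minimizer, so $d(p,q) = \epsilon$, and the backward geodesic sphere $S = S_-(q,\epsilon)$ is a smooth embedded hypersurface through $p$. Since the radial geodesic is simultaneously an $S$-segment realizing $d(S,q) = \epsilon$, the first-variation principle gives $\mathbf{n} \in \nu_p(S)$. Hence $P$ and $S$ share the tangent hyperplane $T_p P = T_p S = \ker g_{\mathbf{n}}(\mathbf{n},\cdot)$, with $\mathbf{n}$ the common inward unit normal pointing from $p$ toward $q$.

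Next, choose a compact neighborhood of $p$ on which the flag curvature is bounded above by some $\lambda$. For all sufficiently small $\epsilon$, Lemma~\ref{lemma:smallBallPrincipalCurvature} applied at $q$ with radius $\epsilon$ shows that every principal curvature of $S$ at $p$ along $\mathbf{n}$ strictly exceeds $\mathfrak{ct}_\lambda(\epsilon)$, which tends to $+\infty$ as $\epsilon \to 0$. Since the principal curvatures of $P$ at $p$ along $\mathbf{n}$ are fixed real numbers, Proposition~\ref{prop:eigenValuePositiveDefinite} implies that $A_{\mathbf{n}}^S - A_{\mathbf{n}}^P$ is positive definite on $T_p P = T_p S$ with respect to $g_{\mathbf{n}}$, for all $\epsilon$ small enough.

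It then remains to upgrade this infinitesimal inequality to a local strict inequality of functions. I work in a chart around $p$ in which $T_p P = T_p S$ is the horizontal subspace $\{x_n = 0\}$ and $dx_n|_p = \mathcal{L}(\mathbf{n})$. Both hypersurfaces are locally graphs $x_n = \phi_P(x')$ and $x_n = \phi_S(x')$, with vanishing value and first derivative at $0$. Taking defining functions $f_P(x) = x_n - \phi_P(x')$ and $f_S(x) = x_n - \phi_S(x')$, both have gradient $\mathbf{n}$ at $p$, so Equation~\eqref{eq:hessianEigenValue} applies to each. Expanding the Chern hessian in coordinates, the Christoffel correction $\Gamma^k_{ij}\partial_k f$ depends only on the ambient metric (via the reference vector $\mathbf{n}$) and therefore cancels in the difference, yielding $g_{\mathbf{n}}((A_{\mathbf{n}}^S - A_{\mathbf{n}}^P)\mathbf{x},\mathbf{y}) = D^2_0(\phi_S - \phi_P)(\mathbf{x},\mathbf{y})$ for $\mathbf{x},\mathbf{y} \in T_p P$. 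Positive definiteness combined with the vanishing first jet of $\phi_S - \phi_P$ at $0$ forces $\phi_S(x') > \phi_P(x')$ in a punctured neighborhood of $0$. Since $\mathbf{n}$ points toward $q$, this means $P$ lies strictly outside the closed backward ball near $p$, so $f_\epsilon(x) = d(x,q) > \epsilon = f_\epsilon(p)$ for $x \in P$ close to $p$ with $x \neq p$.

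The hard part is the final step: one must normalize the defining functions so that both gradients coincide with $\mathbf{n}$ at $p$, and verify that the reference-vector-dependent Christoffel term in the Chern hessian indeed drops out when the two hypersurfaces are compared. The other steps are direct applications of the preliminaries.
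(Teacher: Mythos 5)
Your proof is correct and follows essentially the same route as the paper's: compare the shape operators of $P$ and of the backward sphere $S_-(q,\epsilon)$ at $p$, use \autoref{lemma:smallBallPrincipalCurvature} together with \autoref{prop:eigenValuePositiveDefinite} to make $A_{\mathbf{n}}^S - A_{\mathbf{n}}^P$ positive definite for small $\epsilon$, then pass to local graph representations and the Finslerian Hessian identity \eqref{eq:hessianEigenValue} to turn the infinitesimal comparison into a strict pointwise inequality $\phi_S > \phi_P$ near $p$. The only presentational difference is that you normalize $\epsilon$ via the backward injectivity radius at $q$ where the paper invokes Whitehead convexity, and you justify the vanishing of the Christoffel correction by noting both defining functions have gradient $\mathbf{n}$ at $p$ (so the connection coefficients and the covectors agree there), which is exactly the observation underlying the paper's computation that $d\psi|_p = 0$ kills the $\nabla^{\mathbf{n}}_X Y(\psi)$ term.
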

\begin{proof}
    Fix some local coordinate system $\left\{ x^1,\dots ,x^n \right\}$ given by a chart $\varphi : U \rightarrow \mathbb{R}^n$ near $p$, so that
    \[\varphi(p) = 0, \quad \partial_n|_p = \mathbf{n}, \quad  T_p P = \mathrm{Span}\left\langle \partial_1|_p, \dots , \partial_{n-1}|_p \right\rangle,\]
    where we have denoted the vector fields $\partial_i \coloneqq d\varphi^{-1}\left( \frac{\partial}{\partial x^i} \right)$. We consider $U$ to be relatively compact, i.e., $\overline{U}$ is compact, in order to assert the following.
    \begin{enumerate}[label=\roman*)]
        \item There is an upper bound on the flag curvature of $F$ on $U$.
        \item The principal curvatures at each point of $U \cap P$ in the (unique) unit normal direction are bounded from above by some $\kappa > 0$.
        \item There is a number, say, $\delta > 0$ so that for any $q \in U$, the backward ball $B_{-}(q, \delta) = \left\{ x \;\middle|\; d(x, q) < \delta \right\}$ is strongly geodesically convex, i.e., for any $q_1, q_2 \in B_{-}(q, \delta)$ there exists a unique minimizer joining $q_1$ to $q_2$, which, furthermore, lies completely inside $B_{-}(q, \delta)$. The existence of such $\delta$ follows from the Whitehead convexity theorem \cite[Pg. 164]{Bao2000}.
    \end{enumerate}
    Next, we choose $\epsilon > 0$ sufficiently small satisfying the following.
    \begin{enumerate}[label=\alph*)]
        \item Firstly, we assume that geodesic $\gamma_{\mathbf{v}}(t) = \exp^\nu(t \mathbf{v})$ is a minimizer joining $p$ to $q \coloneqq \exp^\nu(\epsilon \mathbf{v})$, and the backward ball $B_{-}(q, \epsilon) = \left\{ x \;\middle|\; d(x, q) < \epsilon \right\}$ is contained in $U$.
        \item We assume $\epsilon < \frac{\delta}{2}$. Then, for any $x \in U \setminus \left\{ q \right\}$, there exists a unique minimizer joining $x$ to $q$, and consequently, the distance function $f(x) = f_\epsilon(x) = d(x, q)$ is smooth on $U \setminus \left\{ q \right\}$ \cite{SaTa16}.
        \item As the flag curvature of $F$ is upper bounded in $U$, by \autoref{lemma:smallBallPrincipalCurvature}, we assume $\epsilon > 0$ small enough so that all the principal curvatures of the backward sphere 
        \[S \coloneqq S_{-}(q, \epsilon) = \left\{ x \;\middle|\; d(x, q) = \epsilon \right\}\]
        along the inward radial directions strictly exceeds $\kappa$.
    \end{enumerate}
    
    Now, note that $p \in S \cap P$. Furthermore, $\mathbf{n}$ is the inward radial direction of $S$ at $p$, and hence, $T_p P = \left\langle \mathbf{n} \right\rangle^{\perp_{g_{\mathbf{n}}}} = T_p S$. By the implicit function theorem, we have a smaller neighborhood $p \in U_1 \subset U$, and two smooth functions $h^P, h^S : \mathbb{R}^{n-1} \rightarrow \mathbb{R}$ defined near $0$, so that $\varphi(P \cap U_1)$ and $\varphi(S \cap U_1)$ are given as the graphs of $h^P$ and $h^S$ respectively (see \autoref{fig:graphLocalMinima}). Since by choice $T_p S = T_p P = \mathrm{Span}\left\langle \partial_i \middle|_p, 1\le i \le n-1 \right\rangle$, it follows that $h^P(0) = 0 = h^S(0)$, and $\left.\frac{\partial h^P}{\partial x^i}\right|_0 = 0 = \left.\frac{\partial h^S}{\partial x^i}\right|_0$. That is, $dh^P|_0 = 0 = dh^S|_0$. Denoting the projection onto the first $n-1$ components as $\pi : \mathbb{R}^n \rightarrow \mathbb{R}^{n-1}$, we get two functions 
    \[\psi^P \coloneqq x^n - h^P \circ \pi \circ \varphi, \quad \psi^S \coloneqq x^n - h^S \circ \pi \circ \varphi\]
    defined on $U_1$, so that $P \cap U_1 = \left( \psi^P \right)^{-1}(0)$ and $S \cap U_1 = \left( \psi^S \right)^{-1}(0)$. Note that $d\psi^P|_p = dx^n = d\psi^S|_p$. Denote, $\psi = \psi^S - \psi^P$, so that $d \psi|_p = 0$. We cannot directly compute $\textsf{Hess}_p(\psi)$ since it is not defined.
    \begin{figure}[H]
        \centering
    \def\svgwidth{0.4\columnwidth}
    %% Creator: Inkscape 1.2.2 (732a01da63, 2022-12-09), www.inkscape.org
%% PDF/EPS/PS + LaTeX output extension by Johan Engelen, 2010
%% Accompanies image file 'local_graph.pdf' (pdf, eps, ps)
%%
%% To include the image in your LaTeX document, write
%%   \input{<filename>.pdf_tex}
%%  instead of
%%   \includegraphics{<filename>.pdf}
%% To scale the image, write
%%   \def\svgwidth{<desired width>}
%%   \input{<filename>.pdf_tex}
%%  instead of
%%   \includegraphics[width=<desired width>]{<filename>.pdf}
%%
%% Images with a different path to the parent latex file can
%% be accessed with the `import' package (which may need to be
%% installed) using
%%   \usepackage{import}
%% in the preamble, and then including the image with
%%   \import{<path to file>}{<filename>.pdf_tex}
%% Alternatively, one can specify
%%   \graphicspath{{<path to file>/}}
%% 
%% For more information, please see info/svg-inkscape on CTAN:
%%   http://tug.ctan.org/tex-archive/info/svg-inkscape
%%
\begingroup%
  \makeatletter%
  \providecommand\color[2][]{%
    \errmessage{(Inkscape) Color is used for the text in Inkscape, but the package 'color.sty' is not loaded}%
    \renewcommand\color[2][]{}%
  }%
  \providecommand\transparent[1]{%
    \errmessage{(Inkscape) Transparency is used (non-zero) for the text in Inkscape, but the package 'transparent.sty' is not loaded}%
    \renewcommand\transparent[1]{}%
  }%
  \providecommand\rotatebox[2]{#2}%
  \newcommand*\fsize{\dimexpr\f@size pt\relax}%
  \newcommand*\lineheight[1]{\fontsize{\fsize}{#1\fsize}\selectfont}%
  \ifx\svgwidth\undefined%
    \setlength{\unitlength}{428.1303164bp}%
    \ifx\svgscale\undefined%
      \relax%
    \else%
      \setlength{\unitlength}{\unitlength * \real{\svgscale}}%
    \fi%
  \else%
    \setlength{\unitlength}{\svgwidth}%
  \fi%
  \global\let\svgwidth\undefined%
  \global\let\svgscale\undefined%
  \makeatother%
  \begin{picture}(1,0.6117029)%
    \lineheight{1}%
    \setlength\tabcolsep{0pt}%
    \put(0,0){\includegraphics[width=\unitlength,page=1]{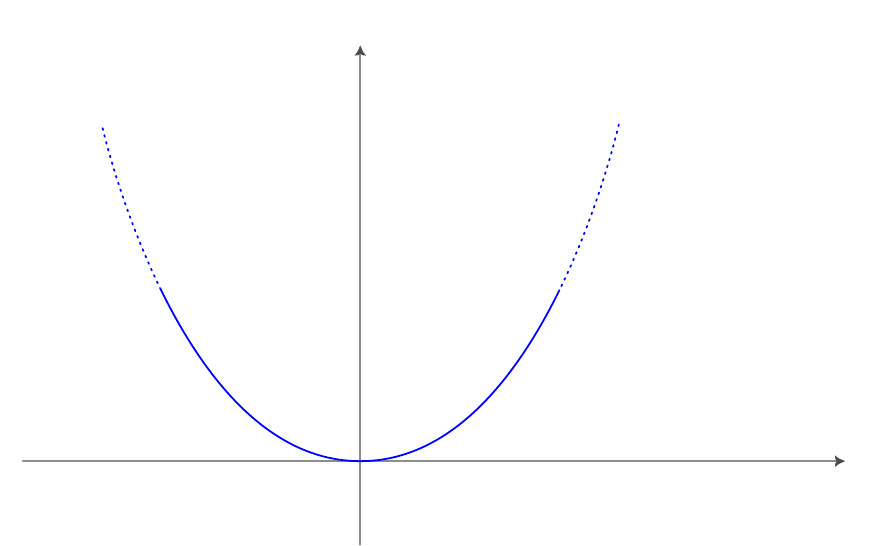}}%
    \put(0.35836092,0.58094809){\color[rgb]{0,0,0}\transparent{0.98000002}\makebox(0,0)[lt]{\lineheight{0}\smash{\begin{tabular}[t]{l}$x^n$\end{tabular}}}}%
    \put(0.69821048,0.40473152){\color[rgb]{0,0,0}\transparent{0.98000002}\makebox(0,0)[lt]{\lineheight{0}\smash{\begin{tabular}[t]{l}$S$\end{tabular}}}}%
    \put(0,0){\includegraphics[width=\unitlength,page=2]{local_graph.pdf}}%
    \put(0.74082855,0.22227734){\color[rgb]{0,0,0}\transparent{0.98000002}\makebox(0,0)[lt]{\lineheight{0}\smash{\begin{tabular}[t]{l}$P$\end{tabular}}}}%
    \put(0.51934051,0.05130267){\color[rgb]{0,0,0}\transparent{0.98000002}\makebox(0,0)[lt]{\lineheight{0}\smash{\begin{tabular}[t]{l}$x^1, \dots, x^{n-1}$\end{tabular}}}}%
    \put(0,0){\includegraphics[width=\unitlength,page=3]{local_graph.pdf}}%
    \put(0.35090811,0.0597553){\color[rgb]{0,0,0}\makebox(0,0)[lt]{\lineheight{1.25}\smash{\begin{tabular}[t]{l}$p$\end{tabular}}}}%
  \end{picture}%
\endgroup%

        \caption{In local coordinates near $p$, the hypersurfaces $S$ and $P$ are graphs of $h^S$ and $h^P$.}
        \label{fig:graphLocalMinima}
    \end{figure}
    
    Now, for $\mathbf{x}, \mathbf{y} \in T_p P = T_p S$, choose some local extensions $X, Y \in \Gamma TU_1$. Then, we compute
    \begin{align*}
        \textsf{Hess}_p(\psi^S)(\mathbf{x}, \mathbf{y}) - \textsf{Hess}_p(\psi^P)(\mathbf{x}, \mathbf{y}) &= \left. \left( XY\left( \psi^S - \psi^P \right) - \nabla^{\mathbf{n}}_X Y \left( \psi^S - \psi^P \right) \right) \right|_p \\
        &= XY(\psi)|_p - d \psi|_p \left( \nabla^{\mathbf{n}}_X Y \middle|_p \right) \\
        &= XY(\psi)|_p.
    \end{align*}
    In particular, for $\partial_i, \partial_j$ with $1 \le i,j \le n-1$, we have
    \begin{align*}
        \textsf{Hess}_p(\psi^S)(\partial_i, \partial_j) - \textsf{Hess}_p(\psi^P)(\partial_i, \partial_j) = \partial_i \partial_j(\psi)|_p = -\left.\frac{\partial^2}{\partial x^i \partial x^j} \right|_0 (h^S - h^P).
    \end{align*}
    In view of \autoref{eq:hessianEigenValue}, the hessian matrix of $h^S - h^P$ at $0$ is then given as
    \[\begin{pmatrix}
        \left.\frac{\partial^2}{\partial x^i \partial x^j} \right|_0 (h^S - h^P)
    \end{pmatrix}_{(n-1)\times (n-1)} = 
    \begin{pmatrix}
        g_{\mathbf{n}}\left( A_{\mathbf{n}}^S \partial_i, \partial_j \right)
    \end{pmatrix} - \begin{pmatrix}
        g_{\mathbf{n}}\left( A_{\mathbf{n}}^P \partial_i, \partial_j \right)
    \end{pmatrix},\]
    where $A_{\mathbf{n}}^S, A_{\mathbf{n}}^P$ are the shape operators associated to $S, P$ respectively, in the direction of $\mathbf{n}$. Now, by our choice, each of eigenvalues of $A_{\mathbf{n}}^S$ strictly dominates the eigenvalues of $A_{\mathbf{n}}^P$. By an application of \autoref{prop:eigenValuePositiveDefinite}, it then follows that the hessian matrix of $h^S - h^P$ is positive definite at $0$. Hence, $h^S - h^P$ attains a strict local minima at $0$, i.e., $h^S(0) = 0 = h^P(0)$ and $h^S > h^P$ in a deleted neighborhood of $0$. But then for some $p \in O \subset U_1$, we have $(S \cap O) \cap (P \cap O) = \left\{ p \right\}$, and $S \cap O$ lies completely on one side of $P\cap O$. In other words, the distance function $f|_{P} = f_\epsilon|_P$ then attains a strict local minima at $p \in P$.
\end{proof}

The next result proves that for $\epsilon$ even smaller, we can get a \emph{global} minima for the distance function for a closed submanifold with arbitrary codimension. The argument presented should be compared to \cite[Lemma 2.3]{Xu2015} and \cite[Lemma 4.18]{BhoPra2023}.

\begin{lemma}\label{lemma:globalMinima}
    Let $P$ be a closed submanifold of $(M, F)$ of positive codimension. Fix some unit normal vector $\mathbf{n} \in \nu_p(P)$. Suppose, for some $\epsilon > 0$, with $\epsilon\mathbf{n} \in \mathcal{O}$, the distance function $f_\epsilon(x) = d(x, \exp^\nu(\epsilon\mathbf{n}))$ attains a strict local minima on $P$ at $p$. Then, for some smaller $\epsilon$, we have $f_\epsilon|_P$ attains a strict \emph{global} minima at $p$ as well.
\end{lemma}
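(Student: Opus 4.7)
The plan is to use the forward triangle inequality in two different ways: once to propagate the assumed strict-local-minimum property from $\epsilon$ down to any smaller parameter $\epsilon'$, and once to control $f_{\epsilon'}$ on the part of $P$ that lies outside the local-minimum neighborhood. Throughout I will write $q_t := \exp^\nu(t\mathbf{n})$.

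First I would write the local-minimum neighborhood as $U = P \cap V$, with $V \subset M$ open, so that $f_\epsilon(x) > f_\epsilon(p)$ for all $x \in U \setminus \{p\}$. After possibly shrinking $\epsilon$ (using the Whitehead convexity theorem) so that $\gamma_{\mathbf{n}}|_{[0,\epsilon]}$ is a minimizer, one has $f_\epsilon(p) = \epsilon$, and the hypothesis then reads $f_\epsilon(x) > \epsilon$ on $U \setminus \{p\}$. For any $\epsilon' \in (0,\epsilon]$, the sub-arcs $\gamma_{\mathbf{n}}|_{[0,\epsilon']}$ and $\gamma_{\mathbf{n}}|_{[\epsilon',\epsilon]}$ are still minimizers, so $f_{\epsilon'}(p) = \epsilon'$ and $d(q_{\epsilon'}, q_\epsilon) = \epsilon - \epsilon'$. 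The forward triangle inequality then yields
\[f_\epsilon(x) = d(x, q_\epsilon) \leq d(x, q_{\epsilon'}) + d(q_{\epsilon'}, q_\epsilon) = f_{\epsilon'}(x) + (\epsilon - \epsilon'),\]
and combining with $f_\epsilon(x) > \epsilon$ gives $f_{\epsilon'}(x) > \epsilon' = f_{\epsilon'}(p)$ for every $x \in U \setminus \{p\}$. Thus the strict-local-minimum property at $p$ is preserved for every $\epsilon' \in (0,\epsilon]$ on the \emph{same} neighborhood $U$.

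Next I would handle the exterior $K := P \setminus V$. Since $P$ is closed in $M$ and $V$ is open, $K$ is closed in $M$ and $p \notin K$; because the backward Finsler balls at $p$ form a basis of neighborhoods in the manifold topology, $\delta := \inf_{x \in K} d(x, p) > 0$. Since $q_{\epsilon'} \to p$ as $\epsilon' \to 0$, we have $d(q_{\epsilon'}, p) \to 0$ (bounded e.g.\ by the $\bar F$-length of $\gamma_{\mathbf{n}}|_{[0,\epsilon']}$). For $\epsilon'$ small enough that both $\epsilon' < \delta/2$ and $d(q_{\epsilon'}, p) < \delta/2$, the inequality $d(x, p) \leq d(x, q_{\epsilon'}) + d(q_{\epsilon'}, p)$ gives
\[f_{\epsilon'}(x) = d(x, q_{\epsilon'}) \geq d(x, p) - d(q_{\epsilon'}, p) \geq \delta - \delta/2 = \delta/2 > \epsilon' = f_{\epsilon'}(p)\]
for every $x \in K$. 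Combined with the interior estimate, this gives a strict global minimum at $p$ for every sufficiently small $\epsilon'$.

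The main obstacle I anticipate is merely the initial normalization: ensuring that $\epsilon$ is small enough for $\gamma_{\mathbf{n}}|_{[0,\epsilon]}$ to be a minimizer, which is essential for the identity $d(q_{\epsilon'}, q_\epsilon) = \epsilon - \epsilon'$ (and hence for the interior step to work). This reduces to the Whitehead convexity theorem and the observation that the conclusion of the lemma explicitly permits a smaller $\epsilon$, so shrinking is free. Everything else is a routine manipulation of the triangle inequality, together with the observation that the closedness of $P$ in $M$ forces a strictly positive backward distance from $K$ to $p$.
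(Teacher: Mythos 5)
Your proof is correct, and the exterior step takes a genuinely different and cleaner route than the paper. Both arguments begin with the same triangle-inequality estimate to carry the strict local minimum on $U$ down to every smaller $\epsilon'$. After that, the paper shrinks $\epsilon$ so that $\overline{B_{-}(q, \epsilon)}$ is compact, sets $K := \overline{B_{-}(q,\epsilon)} \cap P$, notes that $f_{\epsilon'}|_P$ always attains a global minimum on this compact set, and then runs a sequence-extraction argument with $\epsilon_i \to 0$ to show the minimizers must eventually fall inside the local-minimum neighborhood. You instead observe that $K := P \setminus V$ is a closed set missing $p$, hence has positive backward distance $\delta = \inf_{x \in K} d(x,p) > 0$, and the triangle inequality $d(x,p) \le f_{\epsilon'}(x) + d(q_{\epsilon'},p)$ then forces $f_{\epsilon'}|_K \ge \delta/2 > \epsilon' = f_{\epsilon'}(p)$ once $\epsilon'$ and $d(q_{\epsilon'},p)$ are below $\delta/2$. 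This avoids both the compactness normalization and the passage to subsequences, and is the more elementary of the two arguments. One caveat you share with the paper: the initial step requires $\gamma_{\mathbf{n}}|_{[0,\epsilon]}$ to be a minimizer, but the hypothesis is stated for a single given $\epsilon$, so ``shrink $\epsilon$ via Whitehead convexity'' does not by itself transfer the strict-local-minimum hypothesis to the smaller parameter; the paper glosses this as immediate, and in the only place the lemma is used (after \autoref{lemma:localMinima}) it is harmless, since there $\epsilon$ has already been chosen so that the radial geodesic is minimizing.
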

\begin{proof}
    Since $f_\epsilon|_P$ attains a strict local minima at $P$, it is immediate that the geodesic $\gamma_{\mathbf{n}}(t) = \exp^\nu(t \mathbf{n})$ is a minimizer joining $p$ to $q \coloneqq \exp^\nu(\epsilon \mathbf{n})$. For any $0 < \epsilon^\prime < \epsilon$ consider the point $q^\prime = \exp^\nu(\epsilon^\prime \mathbf{n})$ on $\gamma_{\mathbf{n}}$, and note that $d(q^\prime, q) = \epsilon - \epsilon^\prime$. Now, we always have the inclusion $B_{-}(q^\prime, \epsilon^\prime) \subset B_{-}(q, \epsilon)$ (see \autoref{fig:globalMinima}), since for any $x$ with $d(x, q^\prime) < \epsilon^\prime$ it follows that
    \[d(x, q) \le d(x, q^\prime) + d(q^\prime, q) < \epsilon^\prime + (\epsilon - \epsilon^\prime) = \epsilon.\]
    Observe that for the distance function $f_{\epsilon^\prime}(x) = d(x, q^\prime)$, we see that $f_{\epsilon^\prime}|_P$ still attains a strict local minima at $p$. Indeed, for any $x \ne p$ sufficiently close to $p$ we have 
    \[f_{\epsilon^\prime}(x) = d(x,q^\prime) \ge d(x, q) - d(q^\prime, q) > d(p, q) - d(q^\prime, q) = d(p, q^\prime) = f_{\epsilon^\prime}(p).\]
    \begin{figure}[H]
        \centering
    \def\svgwidth{0.6\columnwidth}
    \import{./figures/}{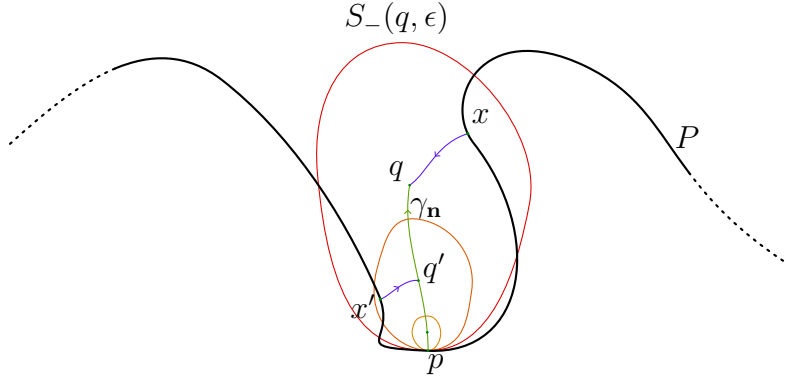}

        \caption{For the point $q^\prime$ on $\gamma_{\mathbf{n}}$, the distance from $P$ is achieved at $x^\prime$.}
        \label{fig:globalMinima}
    \end{figure}
    Without loss of generality, we can take $\epsilon > 0$ small enough so that the closed backward ball $\overline{B_{-}(q, \epsilon)}$ is compact, which follows without any completeness assumption on $F$. Let us denote, 
    \[K \coloneqq \overline{B_{-}(q, \epsilon)} \cap P,\]
    which is clearly compact as $P$ is assumed to be closed. Now, $f_{\epsilon^\prime}|_P$ attains a global minima at $p$ precisely when $d(P, q^\prime) = \epsilon^\prime = d(p, q^\prime)$. Otherwise, we have $d(P, q^\prime) = \inf_{x \in P} d(x, q^\prime) < d(p, q^\prime)$. Choose $p_i \in P$ so that $d(P, q^\prime) \le d(p_i, q^\prime) < d(p, q^\prime) = \epsilon^\prime$ and $d(p_i, q^\prime) \rightarrow d(P, q^\prime)$. Now, $p_i \in \overline{B_{-}(q, \epsilon)}$, and thus, $p_i \in K$. Hence, we have a convergent subsequence $p_{i_j} \rightarrow p_0 \in K \subset P$. Clearly, 
    \[d(P, q^\prime) = \lim d(p_i, q^\prime) = \lim d(p_{i_j}, q^\prime) = d(p_0, q^\prime).\]
    In other words, $f_{\epsilon^\prime}|_P$ always attains a global minima at some $p_0 \in K$, possibly different from $p$, for any $0 < \epsilon^\prime < \epsilon$ (see \autoref{fig:globalMinima}).

    Since $f_\epsilon|_P$ attains a strict local minima at $p$, we have some open neighborhood $p \in O \subset P$, such that $f_{\epsilon}|_{P \cap O}$ attains a strict global minima at $p$. We assume that $O \subset B_{-}(q, \epsilon) \cap P \subset K$. Let us now choose some $\epsilon_i \rightarrow 0$ with $\epsilon_i < \epsilon$, and denote $q_i = \exp^\nu(\epsilon_i \mathbf{n})$. Clearly, $q_i \rightarrow p$. Denote the associated distance functions as $f_i(x) \coloneqq d(x, q_i)$. Now, as argued in the previous paragraph, $f_i|_P$ always attains a global minima in $K$. If possible, suppose, it only attains a global minima in $K \setminus O$. That is, we have  $f_{i}|_P$ attains a global minima at $y_i \in K \setminus O$. Since $K \setminus O$ is again compact, passing to subsequence, we have $y_i \rightarrow y \in K \setminus O$. Now, for any $x \in K$ we have $d(y_i, q_i) = f_i(y_i) \le f_i(x) = d(x, q_i)$. Taking $i\rightarrow \infty$ we have $d(y, p) \le d(x, p)$. Since $x \in K$ is arbitrary, taking $x = p$ we have $d(y, p) \le 0 \Rightarrow d(y, p) = 0 \Rightarrow y = p$, which contradicts $y \not \in O$. Hence, for some $\epsilon_{i_0}$ we must have that $f_{i_0}|_P$ attains a global minima, say, $y_{i_0}$ inside $O$. But then $p = y_{i_0}$ as $f_{i_0}|_{P \cap O}$ attains a strict global minima at $p$. This concludes the proof.
\end{proof}

The following lemma shows that if in some normal direction to an arbitrary submanifold the geodesic is globally distance minimizing for some time, then the same holds true in a neighborhood of that direction.

\begin{lemma}\label{lemma:uniformPositive}
    Let $N$ be a submanifold of $(M, F)$. Suppose, for some unit normal vector $\mathbf{v}_0 \in \nu_p$, we have the cut time $\rho(\mathbf{v}_0) > 0$. Then, there exists some $\epsilon > 0$ and a neighborhood $\mathbf{v}_0 \in V \subset S(\nu)$, such that $\rho(\mathbf{v}) \ge \epsilon$ for all $\mathbf{v} \in V$.
\end{lemma}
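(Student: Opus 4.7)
Pick $t^*\in(0,\rho(\mathbf{v}_0))$; then $q^*:=\exp^\nu(t^*\mathbf{v}_0)$ satisfies $d(N,q^*)=t^*$. The plan is to argue by contradiction. Suppose no such $\epsilon>0$ and neighborhood $V$ exist: then one finds a sequence $\mathbf{v}_n\to\mathbf{v}_0$ in $S(\nu)$ with $t_n:=\rho(\mathbf{v}_n)\to 0$. Writing $p_n\in N$ for the basepoint of $\mathbf{v}_n$ and $z_n:=\exp^\nu(t_n\mathbf{v}_n)$, continuity of $\exp^\nu$ (smooth on $\hat{\nu}\cap\mathcal{O}$, and $C^1$ at the zero section) forces $p_n\to p$ and $z_n\to p$, where $p$ is the basepoint of $\mathbf{v}_0$.

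The first step is to produce, for $n$ large, a second $N$-segment to $z_n$. Since $\exp^\nu|_{\hat{\nu}\cap\mathcal{O}}$ is smooth with non-degenerate differential at every $t\mathbf{v}_0$ for $t>0$ sufficiently small, a short compactness/continuity argument yields $\tau_0>0$ and an $S(\nu)$-neighborhood $V_0$ of $\mathbf{v}_0$ on which no focal point appears along $\gamma_\mathbf{v}|_{[0,\tau_0]}$; for $n$ large, $t_n<\tau_0$ and $\mathbf{v}_n\in V_0$, so $z_n$ cannot be a focal point, and the standard dichotomy at the cut time therefore forces $z_n$ to be a separating point. This provides another $N$-segment $\alpha_n=\gamma_{\mathbf{w}_n}|_{[0,t_n]}$ with $\mathbf{w}_n\in S(\nu_{p_n'})$, $p_n'\in N$ and $\mathbf{w}_n\ne\mathbf{v}_n$. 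By the triangle inequality $p_n'\to p$, and after passing to subsequences $\mathbf{w}_n\to\mathbf{w}_0\in S(\nu_p)$. Working in a chart adapted to $N$ near $p$, the $C^1$-expansion $\exp^\nu(t\mathbf{v})=(\text{basepoint of }\mathbf{v})+t\mathbf{v}+o(t)$ applied to both sides of $\exp^\nu(t_n\mathbf{v}_n)=\exp^\nu(t_n\mathbf{w}_n)$ yields $p_n-p_n'=t_n(\mathbf{w}_n-\mathbf{v}_n)+o(t_n)$; since $p_n,p_n'\in N$, the left-hand side is tangential to $N$ in the chart, so dividing by $t_n$ and taking limits forces $\mathbf{w}_0-\mathbf{v}_0\in T_pN$. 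Combined with $\mathbf{v}_0,\mathbf{w}_0\in\nu_p$ of unit $F$-norm, one obtains $g_{\mathbf{v}_0}(\mathbf{v}_0,\mathbf{w}_0)=g_{\mathbf{v}_0}(\mathbf{v}_0,\mathbf{v}_0)=1=F(\mathbf{v}_0)F(\mathbf{w}_0)$, and the equality case of the Finslerian fundamental inequality (Cauchy--Schwarz for Minkowski norms) forces $\mathbf{w}_0=\mathbf{v}_0$.

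To derive the contradiction, exploit local injectivity of $\exp^\nu$: by smoothness of $\exp^\nu|_{\hat{\nu}}$ along $\{t\mathbf{v}_0:t\in(0,\tau_0]\}$ (local diffeomorphism at every such point) and compactness, combined with the $C^1$-behavior of $\exp^\nu$ across the zero section, one patches together an open neighborhood $\mathcal{W}\subset\nu$ of $0_p$ on which $\exp^\nu$ is injective. For $n$ large, both $t_n\mathbf{v}_n$ and $t_n\mathbf{w}_n$ lie in $\mathcal{W}$, and $\exp^\nu(t_n\mathbf{v}_n)=\exp^\nu(t_n\mathbf{w}_n)$ then forces $\mathbf{v}_n=\mathbf{w}_n$, contradicting our construction. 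The principal obstacle is this final step: producing $\mathcal{W}$ robustly across the zero section, where $\nu$ is only a topological submanifold of $TM$ and $\exp^\nu$ is merely $C^1$, so the classical inverse function theorem is unavailable. One must combine the smooth local injectivity on $\hat{\nu}$ with the $C^1$-structure at $0_p$ by hand---precisely the Finsler-specific difficulty flagged in the introduction, having no real Riemannian analogue.
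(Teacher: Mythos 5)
Your proposal takes a genuinely different route from the paper's, but it is incomplete in a way that you yourself flag at the end, and the missing step is in effect the central difficulty the paper is organized to avoid, so the approach does not constitute a proof.

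The paper's proof of this lemma is short and entirely elementary: fix a compact forward ball $B=\overline{B_{+}(p,\delta)}$, use continuity of $\exp$ (no differentiability needed) to find $\epsilon\in(0,\rho(\mathbf{v}_0))$ such that $\exp^\nu(t\mathbf{v})$ is defined for all $t\le\epsilon$, $\mathbf{v}\in S(\nu_q)$, $q\in B$, and then argue by contradiction with a sequence $\mathbf{v}_i\to\mathbf{v}_0$, $\rho(\mathbf{v}_i)\to 0$: setting $q_i:=\exp^\nu(\epsilon\mathbf{v}_i)$, the failure of $\gamma_{\mathbf{v}_i}$ to remain an $N$-segment up to $\epsilon$ makes $d(N,q_i)$ drop, while $q_i\to\exp^\nu(\epsilon\mathbf{v}_0)$, a point at distance exactly $\epsilon>0$ from $N$; continuity of the distance function produces the contradiction. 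There is no dichotomy between focal and separating points, no Jacobi-field analysis, and crucially no appeal to local injectivity of $\exp^\nu$ near the zero section.

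Your argument instead runs into two issues. First, the invocation of ``the standard dichotomy at the cut time'' (cut point is either focal or separating) needs justification under the lemma's hypotheses: that dichotomy is typically established under completeness or compactness assumptions, neither of which is assumed here. Because $t_n\to 0$ everything eventually happens inside a fixed compact ball, so this can in principle be patched by the same localization the paper uses in \autoref{lemma:globalMinima}, but you would have to supply it. Second, and fatally, the final step --- producing a neighborhood $\mathcal{W}$ of $0_p$ in $\nu$ on which $\exp^\nu$ is injective --- is precisely the Finsler-specific obstruction discussed in \autoref{sec:introduction}: $\nu$ is only a topological submanifold of $TM$ near the zero section, $\mathcal{L}$ is merely a homeomorphism there, and $\exp^\nu$ is only $C^1$, so the inverse function theorem gives you nothing. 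Local injectivity of $\exp^\nu$ across the zero section is essentially what \autoref{thm:tubularNBD} delivers, and in the paper's logical chain that theorem \emph{depends on} the present lemma (through \autoref{thm:cutTimePositive}); assuming it here makes the argument circular. The paper's geometric route through \autoref{lemma:localMinima} and \autoref{lemma:globalMinima} (comparing principal curvatures of small backward spheres with those of $N$) exists precisely to sidestep this, and the compactness proof of the present lemma needs none of it.

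A smaller remark: after fixing $t^\ast$ and $q^\ast$ in your first sentence they are never used, and the hypothesis $\rho(\mathbf{v}_0)>0$ plays no visible role in the remainder of your argument beyond allowing $t^\ast$ to exist; if your strategy were to work it would prove the stronger positivity statement directly, which suggests the missing ingredient is doing all the work.
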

\begin{proof}
    Without assuming $F$ to be complete, one can fix some $\delta > 0$ sufficiently small so that the closed forward ball $B = \overline{B_{+}(p, \delta)} = \left\{ x \;\middle|\; d(p, x) \le \delta \right\}$ is compact. Since the exponential map $\exp : \mathcal{O} \subset TM \rightarrow M$ is continuous in an open neighborhood $\mathcal{O}$ of $TM$ containing the $0$-section, we can get some $\epsilon > 0$ such that $\exp_q(t \mathbf{v})$ is defined for all time $0 \le t \le \epsilon$, and for all $\mathbf{v} \in S(\nu_q)$ with $q \in B$. We may further assume that $0 < \epsilon < \rho(\mathbf{v}_0)$. Set $U = \left\{ \mathbf{v} \in S(\nu_q) \;\middle|\; q \in B_{+}(p, \delta) \cap N \right\}$, which is clearly an open neighborhood of $\mathbf{v}_0$ in $S(\nu)$. If possible, suppose we have a sequence $\mathbf{v}_i \in U$ converging to $\mathbf{v}_0$, with $\lim \inf \rho(\mathbf{v}_i) < \epsilon$. Set $\epsilon_i \coloneqq \rho(\mathbf{v}_i)$, and passing to a subsequence, assume that $\epsilon_i < \epsilon$ for all $i$. Consider the geodesics $\gamma_i : [0, \epsilon] \rightarrow M$ given by $\gamma_i(t) = \exp^\nu(t \mathbf{v}_i)$, and set $q_i \coloneqq \gamma_i(\epsilon)$. By the definition of cut time, $\gamma_i|_{[0, \epsilon_i]}$ is an $N$-segment, which fails to be so beyond that point. In particular, $d(N, q_i) < \rho(\mathbf{v}_i) = \epsilon_i$. Now, $q \coloneqq \exp^\nu(\epsilon \mathbf{v}) = \lim_i \exp^\nu(\epsilon \mathbf{v}_i) = \lim \gamma_i(\epsilon) = \lim q_i$. Hence, in the limit we get $d(N, q) = \lim d(N, q_i) \le \lim \epsilon_i = 0$. This contradicts the fact $q = \exp^\nu(\epsilon \mathbf{v})$ and $\rho(\mathbf{v}) > \epsilon > 0$. Thus, for some possibly smaller neighborhood $\mathbf{v}_0 \in V \subset U \subset S(\nu)$, we have $\rho(\mathbf{v}) \ge \epsilon$ for all $\mathbf{v} \in V$. This concludes the proof.
\end{proof}

We are now in a position to prove the main theorem of this section.
\begin{theorem}\label{thm:cutTimePositive}
    Suppose $N$ is a closed (not necessarily compact) submanifold of a Finsler manifold $(M, F)$. Then, the cut time map $\rho : S(\nu(N)) \rightarrow [0, \infty]$ is always positive. Moreover, for any $\mathbf{n} \in S(\nu)$, there exists some $\epsilon > 0$ and a neighborhood $\mathbf{n} \in V \subset S(\nu)$, such that $\rho(\mathbf{v}) \ge \epsilon$ for all $\mathbf{v} \in V$.
\end{theorem}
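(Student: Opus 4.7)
The plan is to reduce positivity of the cut time $\rho(\mathbf{n})$ at an arbitrary unit normal $\mathbf{n} \in S(\nu_p(N))$ to the hypersurface situation already treated in \autoref{lemma:localMinima} and \autoref{lemma:globalMinima}. The uniform lower bound asserted by the ``moreover'' clause is already the content of \autoref{lemma:uniformPositive}, so I only need to establish the pointwise positivity of $\rho$.

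To carry out the reduction I would construct an embedded hypersurface $P$ near $p$ such that $N \subset P$ in a neighborhood of $p$ and $\mathbf{n} \in \nu_p(P)$. Pick a slice chart $\varphi : U \to \mathbb{R}^n$ for $N$ in which $N \cap U = \{x^{n-k+1} = \cdots = x^n = 0\}$ with $k = \codim N$, so $T_p N = \mathrm{Span}\langle \partial_1|_p, \ldots, \partial_{n-k}|_p \rangle$. Since $\mathcal{L}(\mathbf{n}) \in T_p^*M$ annihilates $T_p N$, it has the form $\mathcal{L}(\mathbf{n}) = \sum_{j > n-k} b_j\, dx^j|_p$ with the $b_j$ not all zero. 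Setting $\phi \coloneqq \sum_{j > n-k} b_j x^j$ and $P \coloneqq \phi^{-1}(0) \subset U$ yields a smooth codimension $1$ submanifold of $U$ that contains $N \cap U$, with $T_p P = \ker \mathcal{L}(\mathbf{n})$; the defining identity $\mathcal{L}(\mathbf{n})(w) = g_{\mathbf{n}}(\mathbf{n}, w)$ then immediately gives $\mathbf{n} \in \nu_p(P)$.

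With $P$ in hand, \autoref{lemma:localMinima} applied to the hypersurface $P$ and the normal $\mathbf{n}$ produces some $\epsilon_0 > 0$ for which $f_{\epsilon_0}(x) = d(x, \exp^\nu(\epsilon_0 \mathbf{n}))$ attains a strict local minimum at $p$ on $P$. Because $N$ is locally contained in $P$, the restriction $f_{\epsilon_0}|_N$ also attains a strict local minimum at $p$. Since $N$ is closed and of positive codimension, \autoref{lemma:globalMinima} then furnishes a possibly smaller $0 < \epsilon \leq \epsilon_0$ for which $f_\epsilon|_N$ attains a strict \emph{global} minimum at $p$, that is, $d(N, \exp^\nu(\epsilon \mathbf{n})) = \epsilon = d(p, \exp^\nu(\epsilon \mathbf{n}))$. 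Equivalently, $\gamma_{\mathbf{n}}|_{[0, \epsilon]}$ is an $N$-segment, so $\rho(\mathbf{n}) \geq \epsilon > 0$. Combined with \autoref{lemma:uniformPositive}, this also gives the promised neighborhood version.

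The only delicate step is the construction of $P$. ``Finsler normality'' of $\mathbf{n}$ is defined via the fundamental tensor $g_{\mathbf{n}}$ rather than any ambient inner product, so one cannot simply use a Riemannian projection; the trick is to use the Legendre transformation $\mathcal{L}$ to turn the normality condition $\mathbf{n} \in \nu_p(P)$ into the \emph{linear} condition $d\phi|_p = \mathcal{L}(\mathbf{n})$ on a defining function of $P$. Once this translation is made, the remainder of the argument is a formal chaining of \autoref{lemma:localMinima}, \autoref{lemma:globalMinima}, and \autoref{lemma:uniformPositive}.
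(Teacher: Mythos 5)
Your proposal is correct and follows essentially the same route as the paper: reduce to a hypersurface $P \supset N$ near $p$ with $\mathbf{n} \in \nu_p(P)$ by using the Legendre transform to express the normality condition linearly, then chain \autoref{lemma:localMinima}, \autoref{lemma:globalMinima}, and \autoref{lemma:uniformPositive}. The only (inessential) difference is that you construct the defining function $\phi$ explicitly via a slice chart, while the paper abstractly produces a function $h$ with $dh$ annihilating $TN$ and $\nabla h \in S(\nu)$; both yield the same local hypersurface $P$.
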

\begin{proof}
    Pick some $\mathbf{n} \in S(\nu_p(N))$, and consider the geodesic $\gamma_{\mathbf{n}}(t) = \exp^\nu(t \mathbf{n})$. Now, $\mathcal{L}(\mathbf{n})$ is a $1$-form which annihilates $T_p N$. In particular, we have a function $h$ defined on some open neighborhood $p \in W \subset M$, so that $h(p) = 0$ and $dh$ annihilates $TN$. Moreover, $\tilde{\mathbf{n}} = \nabla h \in S(\nu)$. Define $P = h^{-1}(0)$. Clearly, $P$ is a codimension $1$ submanifold, and $N \cap W \subset P$. Furthermore, $\mathbf{n} \in S(\nu_p(P))$, and hence $\gamma_{\mathbf{n}}$ is a $P$-geodesic as well. By \autoref{lemma:localMinima}, we have some $\epsilon > 0$ so that the distance function $f_\epsilon(x) = d(x, \gamma_{\mathbf{n}}(\epsilon))$ attains a strict local minima on $P$ at $p$. Since $N \cap W \subset P$, it follows that $f_\epsilon|_{N \cap W}$ attains a strict local minima at $p$ as well. As $N \cap W$ is open in $N$, we have $f_\epsilon|_N$ attains a strict local minima at $p$. But then by \autoref{lemma:globalMinima}, for some $0 < \epsilon^\prime < \epsilon$, we have $f_{\epsilon^\prime}|_N$ attains a strict \emph{global} minima at $p$. In other words, $d(N, \gamma_{\mathbf{n}}(\epsilon^\prime)) = \epsilon^\prime$. But then by the definition of cut time (\autoref{eq:cutTime}), we have $\rho(\mathbf{n}) \ge \epsilon^\prime > 0$. We conclude the proof by applying \autoref{lemma:uniformPositive}.
\end{proof}

Note that the above theorem holds true without any completeness assumption on $F$. As an immediate corollary, we get the following.
\begin{corollary}\label{cor:cutLocusDisjoint}
    Let $N$ be a closed submanifold of a Finsler manifold $(M, F)$. Then, $N \cap \mathrm{Cu}(N) = \emptyset$.
\end{corollary}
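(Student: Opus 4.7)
The plan is to obtain this corollary as an essentially immediate consequence of \autoref{thm:cutTimePositive} via a short contradiction argument. The guiding observation, already built into \autoref{defn:cutLocusN}, is that every point of $\mathrm{Cu}(N)$ is realized as $\gamma_{\mathbf{v}}(\rho(\mathbf{v}))$ for some $\mathbf{v} \in S(\nu(N))$ with finite cut time; this is what makes the positivity of $\rho$ into precisely the disjointness statement we want.

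Suppose, for contradiction, that $p \in N \cap \mathrm{Cu}(N)$. By \autoref{defn:cutLocusN} one can find $\mathbf{v} \in S(\nu(N))$ and $\ell \ge 0$ such that $\gamma_{\mathbf{v}}|_{[0, \ell]}$ is an $N$-segment with $\gamma_{\mathbf{v}}(\ell) = p$ and no strict extension of it remains an $N$-segment; the second condition is precisely the statement $\rho(\mathbf{v}) = \ell$. On the other hand, since $\gamma_{\mathbf{v}}|_{[0,\ell]}$ is a unit-speed $N$-segment, we have
\[\ell \;=\; L\!\left(\gamma_{\mathbf{v}}|_{[0,\ell]}\right) \;=\; d\!\left(N,\, \gamma_{\mathbf{v}}(\ell)\right) \;=\; d(N, p).\]
But $p \in N$ forces $d(N, p) = 0$, so $\rho(\mathbf{v}) = \ell = 0$, directly contradicting the strict positivity asserted in \autoref{thm:cutTimePositive}. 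This concludes the argument.

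The entire analytic content has already been absorbed into \autoref{thm:cutTimePositive}, so no further obstacle remains at the level of the corollary: the statement $N \cap \mathrm{Cu}(N) = \emptyset$ unpacks to exactly the positivity of $\rho$ on the unit normal cone bundle $S(\nu(N))$, and no completeness hypothesis on $F$ is needed in this reduction since the theorem itself is proved without one.
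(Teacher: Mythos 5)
Your proof is correct and follows essentially the same route as the paper's: both identify a cut point $p \in N$ as $\gamma_{\mathbf{v}}(\rho(\mathbf{v}))$ for some $\mathbf{v} \in S(\nu)$, observe that $p \in N$ forces $\rho(\mathbf{v}) = d(N,p) = 0$, and derive a contradiction with \autoref{thm:cutTimePositive}. You merely unpack the step $x = \exp^\nu(\rho(\mathbf{v})\mathbf{v})$ a bit more explicitly than the paper does.
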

\begin{proof}
    If $x \in N \cap \mathrm{Cu}(N)$, then $x = \exp^\nu(\rho(\mathbf{v}) \mathbf{v})$ for some $\mathbf{v}\in S(\nu)$. But then $x \in N \Rightarrow d(N, x) = 0 \Rightarrow \rho(\mathbf{v}) = 0$, contradicting \autoref{thm:cutTimePositive}. Hence, $N \cap \mathrm{Cu}(N) = \emptyset$.
\end{proof}

Let us also note the following easy consequence.

\begin{corollary}\label{cor:smoothnessOfDistanceSquare}
    Let $N$ be closed submanifold of a forward complete Finsler manifold $(M,F)$, and furthermore, hypothesis (\hyperref[eq:H]{H}) holds. Then, the distance squared function $f(x) \coloneqq d(N, x)^2$ is $C^1$-smooth on an open neighborhood $U$ of $N$, and is $C^\infty$ on $U \setminus N$.
\end{corollary}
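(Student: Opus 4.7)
The plan is to exploit the tubular neighborhood $U$ provided by \autoref{thm:tubularNBD} and work with the identity $f = F^2 \circ \mathbf{v}$, where $\mathbf{v} : U \to \mathcal{U} \subset \nu$ denotes the inverse of $\exp^\nu|_{\mathcal{U}}$. Shrinking $\mathcal{U}$ if needed so that every nonzero $\mathbf{v} \in \mathcal{U}$ satisfies $F(\mathbf{v}) < \rho\bigl(\mathbf{v}/F(\mathbf{v})\bigr)$ (possible by \autoref{thm:cutTimePositive} together with hypothesis (\hyperref[eq:H]{H})), the associated geodesic is an $N$-segment, so $d(N,x) = F(\mathbf{v}(x))$ and hence $f(x) = F(\mathbf{v}(x))^2$ throughout $U$. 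Smoothness of $f$ on $U \setminus N$ is then immediate: by \autoref{defn:geometricTubularNbd}, on $\mathcal{U} \cap \hat{\nu}$ the normal exponential restricts to a smooth diffeomorphism onto $U \setminus N$, so $\mathbf{v}$ is $C^\infty$ there, and $F^2$ is $C^\infty$ on $\widehat{TM}$, giving $f \in C^\infty(U\setminus N)$.

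The delicate point is $C^1$-regularity at each $p \in N$, which I would establish in two substeps. First, the estimate $0 \leq f(x) = d(N,x)^2 \leq d(p,x)^2 \leq C|x - p|^2$ in a fixed coordinate chart (valid since the Finsler distance is locally equivalent to the ambient Euclidean norm, by compactness of the $F$-unit spheres in a chart) shows $f$ is Fr\'echet differentiable at $p$ with $df|_p = 0$. Second, for continuity of $df$ at $p$, it suffices to prove $df|_x \to 0$ as $x \to p$ through $U \setminus N$. For this I would invoke the Finslerian Gauss lemma: wherever $d(N,\cdot)$ is smooth one has $F(\nabla d(N,\cdot)) \equiv 1$, so that $\nabla f = 2\,d(N,\cdot)\,\nabla d(N,\cdot)$ satisfies $F(\nabla f)|_x = 2\,d(N,x)$. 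Under the Legendre transform this reads $F^*(df|_x) = 2\,d(N,x) \to 0$ as $x \to p$, and since $F^*$ is locally equivalent to any coordinate norm on $T^*M$, this forces $df|_x \to 0$ in coordinates.

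The main obstacle is justifying the Gauss-lemma identity $F(\nabla d(N,\cdot)) = 1$ on $U \setminus N$ in the Finsler submanifold setting. Although standard, I would prove it by a direct chart computation: using the parametrization $x = \exp^\nu(\mathbf{v}(x))$, one checks that the radial $N$-geodesic is unit-speed and realizes $d(N,\cdot)$ uniquely, and then the first variation formula (with boundary term vanishing since $\dot\gamma(0) \in \hat\nu$ is $g_{\dot\gamma(0)}$-orthogonal to $TN$) identifies $\mathcal{L}^{-1}(d\,d(N,\cdot))|_x$ with the unit tangent $\dot\gamma(F(\mathbf{v}(x)))$. A minor secondary issue is ensuring the sub-shrinkage of $\mathcal{U}$ can be done consistently, which follows from the open uniform lower bound for $\rho$ in \autoref{thm:cutTimePositive}.
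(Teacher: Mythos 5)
Your proof is correct (modulo fleshing out the sketched Finsler Gauss lemma), but it takes a genuinely different and more self-contained route than the paper. The paper's proof is a two-line deduction: it cites the regularity theorem \cite[Theorem~4.12]{BhoPra2023} (which asserts that $f$ is $C^1$ on $M\setminus\mathrm{Se}(N)$ and $C^\infty$ on $M\setminus(N\cup\mathrm{Cu}(N))$), combines this with the closedness of $\mathrm{Cu}(N)$ from \cite[Theorem~4.8]{BhoPra2023}, and concludes using \autoref{cor:cutLocusDisjoint} that the complement of $\mathrm{Cu}(N)$ is an open neighborhood of $N$ with the desired properties. Your approach instead works entirely inside the tubular neighborhood from \autoref{thm:tubularNBD}, writes $f = F^2\circ\mathbf{v}$ with $\mathbf{v}=(\exp^\nu|_{\mathcal U})^{-1}$, obtains $C^\infty$ off $N$ from the diffeomorphism on $\hat\nu$, gets $df|_p = 0$ on $N$ from the elementary quadratic estimate $0\le f(x)\le d(p,x)^2$, and then controls $df$ near $N$ via the eikonal identity $F^*(df) = 2\,d(N,\cdot)\to 0$, which you correctly reduce (through the positive $1$-homogeneity of $\mathcal L$) to the Finslerian Gauss lemma $F(\nabla d(N,\cdot))\equiv 1$ on $U\setminus N$. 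What this buys you: your argument does not invoke hypothesis (\hyperref[eq:H]{$\mathsf H$}) or forward completeness at any essential point --- the tubular neighborhood alone (which \autoref{thm:tubularNBD} supplies for any closed submanifold, with no completeness assumption) is enough, so the open set $U$ you produce works without the extra hypotheses in the corollary statement. What the paper's route buys: it is much shorter, delegating the analytic content to a result already available in the literature, and it produces the larger open set $M\setminus\mathrm{Cu}(N)$ rather than an unquantified tubular neighborhood. The only thing still owed in your version is a careful write-up of the first-variation computation behind the Gauss lemma (in particular the point that $L(\Lambda(s,\cdot)) = d(N,\Lambda(s,\ell))$ holds \emph{with equality} for the radial variation $\Lambda(s,t)=\exp^\nu(t\,\mathbf v(x_s))$, not merely $\ge$, so that differentiation is legitimate); you flag this yourself and the plan is sound.
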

\begin{proof}
    By \cite[Theorem 4.12]{BhoPra2023}, we have $f$ is $C^1$ on $M \setminus \mathrm{Se}(N)$ and $C^\infty$ on $M \setminus \left( N \cup \mathrm{Cu}(N) \right)$. Since $\mathrm{Cu}(N)$ is closed by \cite[Theorem 4.8]{BhoPra2023}, we have from \autoref{cor:cutLocusDisjoint} that $N \setminus \mathrm{Cu}(N)$ is an open neighborhood of $N$. The proof follows.
\end{proof}

\subsection{Existence of Tubular Neighborhood}
As an application of \autoref{thm:cutTimePositive}, we can now prove the existence of a geometric tubular neighborhood for an arbitrary closed submanifold in a Finsler manifold.

\begin{theorem}\label{thm:tubularNBD}
    Let $N$ be a closed submanifold of a Finsler manifold $(M, F)$. Then, there exists a smooth function $\epsilon : N \rightarrow (0, \infty)$ such that $N$ admits a geometric $\epsilon$-tubular neighborhood. If $N$ is assumed to be compact, then we can take $\epsilon > 0$ as constant, and moreover, for some possibly smaller $\epsilon > 0$, the image of the $\epsilon$-tubular neighborhood under $\exp^\nu$ can then be identified with the set $\left\{ x \;\middle|\; d(N, x) < \epsilon \right\}$.
\end{theorem}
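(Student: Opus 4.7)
The plan is to use \autoref{thm:cutTimePositive} to build a smooth radius function $\epsilon$ and the candidate open set $\mathcal{U}$, and then to verify the three conditions of \autoref{defn:geometricTubularNbd} in turn: injectivity of $\exp^\nu$ on $\mathcal{U}$, the diffeomorphism property on $\mathcal{U} \setminus \mathbf{0}$, and the topological homeomorphism across the zero section. The main obstacle I anticipate is this last step, because $\exp^\nu$ is only $C^1$ at $\mathbf{0}$, so the Riemannian-style inverse function theorem is unavailable there; I plan to sidestep this by invoking Brouwer's invariance of domain.

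For the construction of $\epsilon$, fix $p \in N$. The fiber $S(\nu_p)$ is compact (via $\mathcal{L}$ it is the Euclidean unit sphere in $(T_pN)^0$ with respect to the dual of $F$), so the local uniform lower bound in \autoref{thm:cutTimePositive} applied to a finite subcover produces a neighborhood $W_p \subset S(\nu)$ of $S(\nu_p)$ and a constant $\eta_p > 0$ with $\rho \ge \eta_p$ on $W_p$. A standard tube-lemma argument, using local triviality of the sphere bundle $S(\nu) \to N$, then yields an open $U_p \ni p$ in $N$ with $S(\nu)|_{U_p} \subset W_p$. A smooth partition of unity $\{\chi_p\}$ subordinate to $\{U_p\}$ gives a smooth function $\epsilon(q) \coloneqq \tfrac12 \sum_p \chi_p(q)\, \eta_p$ satisfying $\epsilon(q) < \inf_{\mathbf{v} \in S(\nu_q)} \rho(\mathbf{v})$ for all $q \in N$, because the sum is over finitely many $p$ with $q \in U_p$, each contributing an $\eta_p$ bounded above by every $\rho(\mathbf{v})$ over $S(\nu_q)$. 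Set $\mathcal{U} \coloneqq \{\mathbf{v} \in \nu : F(\mathbf{v}) < \epsilon(\pi(\mathbf{v}))\}$.

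For injectivity, suppose $\exp^\nu(\mathbf{v}_1) = \exp^\nu(\mathbf{v}_2) = q$ with $\mathbf{v}_i \in \mathcal{U}$. Since $F(\mathbf{v}_i) < \rho(\mathbf{v}_i/F(\mathbf{v}_i))$, each truncated geodesic $\gamma_{\mathbf{v}_i/F(\mathbf{v}_i)}|_{[0,F(\mathbf{v}_i)]}$ is an $N$-segment, and hence $F(\mathbf{v}_1) = d(N,q) = F(\mathbf{v}_2)$. If $\mathbf{v}_1 \ne \mathbf{v}_2$, these $N$-segments are distinct, so $q \in \mathrm{Se}(N)$; concatenating the first half of one with a forward extension of the other past $q$ would yield a length-minimizing curve with a corner at $q$, which is impossible since length minimizers must be smooth geodesics. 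Hence the extension of each $\gamma_{\mathbf{v}_i}$ past $q$ fails to be an $N$-segment, i.e.\ $F(\mathbf{v}_i) = \rho(\mathbf{v}_i/F(\mathbf{v}_i))$, contradicting $\epsilon < \rho$. On $\mathcal{U} \setminus \mathbf{0}$ the map $\exp^\nu$ is smooth and, by \cite[Lemma 4.4]{BhoPra2023}, has no tangent focal point strictly before the cut time; so it is a local diffeomorphism there, and combined with injectivity it is a diffeomorphism onto its open image. To extend the homeomorphism across $\mathbf{0}$, recall from \autoref{sec:preliminaries} that $\nu$ is a closed topological submanifold of $TM$ of dimension $\dim M$; thus $\mathcal{U}$ is an open subset of a $\dim M$-manifold and $\exp^\nu|_\mathcal{U}$ is a continuous injection into the equidimensional manifold $M$. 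Brouwer's invariance of domain then gives that $\exp^\nu(\mathcal{U})$ is open in $M$ and $\exp^\nu|_\mathcal{U}$ is a homeomorphism onto it, as required.

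In the compact case, $S(\nu)$ is itself compact, and the lower semi-continuous, strictly positive function $\rho$ attains a positive minimum $\rho_0$, so any constant $\epsilon \in (0, \rho_0)$ works. For the image identification, the inclusion $\exp^\nu(\mathcal{U}) \subset \{x : d(N,x) < \epsilon\}$ is immediate from $F(\mathbf{v}) = d(N, \exp^\nu(\mathbf{v}))$ for $\mathbf{v} \in \mathcal{U}$. Conversely, given $x$ with $d(N,x) < \epsilon$, compactness of $N$ produces a closest point $p \in N$ realizing $d(p,x) = d(N,x)$; after shrinking $\epsilon$ so that every forward $\epsilon$-ball centered at a point of $N$ lies inside a strongly geodesically convex Whitehead neighborhood (possible by a finite cover of $N$), the unique minimizer from $p$ to $x$ exists, and its initial velocity is a normal vector $\mathbf{v} \in \nu_p$ with $F(\mathbf{v}) = d(p,x) < \epsilon$, placing $\mathbf{v}$ in $\mathcal{U}$ with $\exp^\nu(\mathbf{v}) = x$.
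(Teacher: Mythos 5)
Your proof is correct and follows essentially the same route as the paper: you build the smooth radius function $\epsilon$ from the local uniform positivity of the cut time in \autoref{thm:cutTimePositive}, prove injectivity by the corner/concatenation argument (which the paper compresses into the assertion that $\exp^\nu(\mathcal{U})$ avoids $\mathrm{Se}(N)$), invoke invariance of domain to cross the zero section where $\exp^\nu$ is only $C^1$, use the absence of tangent focal points before the cut time for the diffeomorphism on $\mathcal{U}\setminus\mathbf{0}$, and use Whitehead convexity plus compactness of $N$ for the image identification. The one cosmetic difference is that you construct $\epsilon$ by running the compactness argument over each fiber $S(\nu_p)$ and then applying the tube lemma, whereas the paper covers compact subsets of $N$ directly and uses a compact exhaustion; both amount to the same partition-of-unity globalization.
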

\begin{proof}
    Let $K \subset N$ be a compact set. For any $\mathbf{v} \in S(\nu_p)$ with $p \in K$, by \autoref{thm:cutTimePositive}, we can get an $\epsilon_{\mathbf{v}} > 0$ and an open neighborhood $\mathbf{v} \in U_{\mathbf{v}} \subset S(\nu)$, such that $\rho(\mathbf{u}) \ge \epsilon_{\mathbf{v}}$ for all $\mathbf{u} \in U_{\mathbf{v}}$. Set $V_{\mathbf{v}} \coloneqq \pi \left( U_{\mathbf{v}} \right)$, where $\pi : S(\nu) \rightarrow N$ is the projection map. Clearly, $V_{\mathbf{v}} \subset N$ is open, and we have an open covering $\left\{ V_{\mathbf{v}} \;\middle|\; \mathbf{v} \in S(\nu) \cap \pi^{-1}(K) \right\}$. Since $K$ is compact, we have a finite covering, say, $\left\{ V_{\mathbf{v}_j} \right\}_{j=1}^{k_0}$. Set, $\epsilon_K = \min_j \epsilon_{\mathbf{v}_j} > 0$. Then we have 
    \[\rho(\mathbf{v}) \ge \epsilon_K , \quad \mathbf{v} \in S(\nu) \cap \pi^{-1}(K).\]
    Now, consider a compact exhaustion of $N$, i.e., get countably many open sets $B_i \subset N$, such that 
    \[N = \bigcup_{i \ge 1} B_i, \quad \overline{B_i} \subset B_{i+1}, \quad \text{$\overline{B_i}$ is compact.}\]
    By the previous paragraph, for each such $B_i$, we can get some $\epsilon_i \coloneqq \epsilon_{\overline{B_i}} > 0$. By a standard argument using partition of unity, we can now get a smooth function $\epsilon : N \rightarrow (0, \infty)$ such that, $\epsilon|_{\overline{B_i}} \le \epsilon_i$. Then, $\rho(\mathbf{v}) \ge \epsilon(p)$ for any $\mathbf{v} \in S(\nu_p)$. Consider the set
    \[\mathcal{U} \coloneqq \cup_{p \in N} \left\{ t \mathbf{v} \;\middle|\; 0 \le t < \epsilon(p), \; \mathbf{v} \in S(\nu_p) \right\} \subset \nu.\]
    Note that $\mathcal{U} \subset \mathcal{O}$, the domain of the exponential map. Since $\epsilon$ is continuous, we see that $\mathcal{U}$ is an open neighborhood of the $0$-section in $\nu$. As $\mathcal{U}$ avoids the tangential cut locus $\widetilde{\mathrm{Cu}}(N)$, its image avoids cut locus $\mathrm{Cu}(N)$. In particular, $\exp^\nu(\mathcal{U})$ does not intersect $\mathrm{Se}(N)$, and consequently, $\exp^\nu|_{\mathcal{U}}$ is injective. It follows from the invariance of domain that $\exp^\nu|_{\mathcal{U}}$ is then a homeomorphism onto its image. Denote, $\widehat{\mathcal{U}} \coloneqq \mathcal{U} \setminus \mathbf{0}$. For any $\mathbf{v} \in S(\nu_p)$ and $0 < t < \epsilon(p)$, the $N$-geodesic $\gamma_{\mathbf{v}}$ is an $N$-segment in $[0, t]$. In particular, $t\mathbf{v} \in \widehat{\mathcal{U}}$ is not a tangent focal point of $N$. Hence, by the implicit function theorem, $\exp^\nu|_{\widehat{\mathcal{U}}}$ is a local diffeomorphism at $t\mathbf{v}$. But then $\exp^\nu|_{\widehat{\mathcal{U}}}$ is a diffeomorphism, as it is shown to be injective. Thus, $\mathcal{U}$ is a geometric $\epsilon$-tubular neighborhood of $N$.
    
    If $N$ is compact, we can take $K = N$ and the constant $\epsilon = \epsilon_N > 0$ so that $\rho(\mathbf{v}) \ge \epsilon$ for all $\mathbf{v} \in S(\nu)$, and get the same tubular neighborhood as above. Let us now identify the image. Firstly, as in the proof of \autoref{lemma:localMinima}, by the Whitehead convexity theorem, we may assume $\epsilon > 0$ to be so small that for any $p \in N$, the forward ball $B_{+}(p, \epsilon) = \left\{ x \;\middle|\; d(p, x) < \epsilon \right\}$ is strongly geodesically convex. Now, for any $\mathbf{v} \in S(\nu)$ and for any $t < \epsilon \le \rho(\mathbf{v})$, we have $d(N, \exp^\nu(t \mathbf{v})) = t < \epsilon$. Hence, $\exp^\nu(\mathcal{U}) \subset \mathcal{V}_\epsilon \coloneqq \left\{ x \;\middle|\; d(N, x) < \epsilon \right\}$. For the converse, choose some $x \in M$ with $d(N, x) < \epsilon$. Then, there is a sequence $p_i \in N$ such that $d(N, x) = \lim d(p_i, x)$. As $N$ is compact, we may assume that $p_i \rightarrow p \in N$. Then, $d(p,x) = \lim d(p_i, x) = d(N, x) < \epsilon$. By the convexity, there exists a (unique) minimizer, say, $\gamma$ joining $p$ to $x$. As $\gamma$ has the length $d(N, x)$, by the first variational principal, we see that $\gamma$ is an $N$-segment. Hence, we can write, $\gamma(t) = \exp^\nu(t \mathbf{v})$, for $\mathbf{v} = \dot \gamma(0) \in S(\nu_p)$. But then $x = \exp^\nu(d(N,x)\mathbf{v}) \in \exp^\nu\left( \mathcal{U} \right)$, as $d(N, x) < \epsilon$. Thus, $\mathcal{V}_\epsilon \subset \exp^\nu(\mathcal{U})$. The claim then follows.
\end{proof}

Rephrasing the second part of \autoref{thm:tubularNBD}, we get the following.
\begin{corollary}\label{cor:smallBallUniqueIntersection}
    Suppose $N$ is a compact submanifold of a Finsler manifold $(M, F)$. Then, there exists an $\epsilon > 0$ such that for any $q \in M$ satisfying $d(N, q) < \epsilon$ the backward sphere $S_{-}(q, d(N,q))$ intersects $N$ in a unique point.
\end{corollary}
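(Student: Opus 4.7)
The plan is to reduce this to the second half of \autoref{thm:tubularNBD}, which already shows that for a suitable constant $\epsilon > 0$, the image of the tubular neighborhood $\mathcal{U}$ under $\exp^\nu$ coincides exactly with $\mathcal{V}_\epsilon = \left\{ x \;\middle|\; d(N, x) < \epsilon \right\}$, and $\exp^\nu|_{\mathcal{U}}$ is injective. I will also invoke the Whitehead convexity assumption built into the proof of \autoref{thm:tubularNBD}, namely that $\epsilon$ can be chosen so that $B_{+}(p, \epsilon)$ is strongly geodesically convex for every $p \in N$.

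First I would fix such an $\epsilon > 0$ and pick any $q$ with $d(N, q) < \epsilon$. The case $q \in N$ is immediate since then $d(N, q) = 0$ and $S_{-}(q, 0) = \{q\}$, so assume $q \notin N$. Since $N$ is compact and $p \mapsto d(p, q)$ is continuous, the infimum $d(N, q)$ is attained, so $S_{-}(q, d(N, q)) \cap N$ is nonempty.

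For uniqueness, suppose $p_1, p_2 \in N$ both satisfy $d(p_i, q) = d(N, q) < \epsilon$. Because $q \in B_{+}(p_i, \epsilon)$ and this ball is strongly convex, there is a unique minimizer $\gamma_i$ from $p_i$ to $q$; as $L(\gamma_i) = d(N, q)$ realizes the distance from $N$, the first variation principle forces $\gamma_i$ to be an $N$-segment. Writing $\gamma_i(t) = \exp^\nu(t \mathbf{v}_i)$ with $\mathbf{v}_i \in S(\nu_{p_i})$, the bound $d(N, q) < \epsilon \le \rho(\mathbf{v}_i)$ places $d(N, q) \mathbf{v}_i$ inside $\mathcal{U}$. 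Then $\exp^\nu(d(N, q) \mathbf{v}_1) = q = \exp^\nu(d(N, q) \mathbf{v}_2)$, and injectivity of $\exp^\nu|_{\mathcal{U}}$ gives $d(N, q) \mathbf{v}_1 = d(N, q) \mathbf{v}_2$, whence $\mathbf{v}_1 = \mathbf{v}_2$ (as $d(N, q) > 0$) and in particular $p_1 = \pi(\mathbf{v}_1) = \pi(\mathbf{v}_2) = p_2$.

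Since all the technical content (positivity of cut time, identification of the tubular neighborhood with $\mathcal{V}_\epsilon$, Whitehead convexity) is already built into \autoref{thm:tubularNBD}, there is no real obstacle here; the only care needed is to observe that \emph{every} point of $N$ achieving the distance $d(N,q)$ produces an initial vector inside $\mathcal{U}$, so the injectivity statement applies to all of them simultaneously rather than merely to the one minimizer constructed in the proof of \autoref{thm:tubularNBD}.
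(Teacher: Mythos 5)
Your proof is correct and follows exactly the route the paper intends: the paper gives no separate argument here, merely noting that the corollary is a rephrasing of the second half of \autoref{thm:tubularNBD}, and you have supplied precisely the missing details. The key observation you flag—that \emph{every} point of $N$ realizing $d(N,q)$ yields, via strong convexity and the first variation principle, an initial vector $d(N,q)\mathbf{v}_i \in \mathcal{U}$ to which the injectivity of $\exp^\nu|_{\mathcal{U}}$ applies—is exactly what makes the rephrasing legitimate.
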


As mentioned earlier, the above corollary proves the statement (\hyperref[eq:innamiStatement]{S}). Motivated by \autoref{eq:injectivityRadius}, we can define the \emph{forward injectivity radius} of $N$ as 
\begin{equation}\label{eq:injectivityRadiusSubmanifold}
    \mathrm{Inj}^{+}(N) \coloneqq \inf_{\mathbf{v} \in S(\nu)} \rho(\mathbf{v}) = \inf_{\mathbf{v} \in S(\nu)}  \sup \left\{ t \;\middle|\; d(N, \exp^\nu(t\mathbf{v})) = t \right\}.
\end{equation}
It is immediate that $d(N, \mathrm{Cu}(N)) \ge \mathrm{Inj}^{+}(N)$. Then, we can restate the above as follows. 
\begin{corollary}\label{cor:injectivityRadiusPositive}
    Given a compact submanifold $N$ of a Finsler manifold $(M, F)$, we have $\mathrm{Inj}^{+}(N)$ is positive, and consequently, $d(N, \mathrm{Cu}(N)) > 0$.
\end{corollary}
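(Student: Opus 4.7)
The plan is to derive this directly from \autoref{thm:cutTimePositive} by a finite-covering argument. The key observation is that when $N$ is compact, the unit normal cone bundle $S(\nu)$ is itself compact: the projection $\pi : S(\nu) \to N$ has fibers $S(\nu_p) = \{\mathbf{v} \in \nu_p : F_p(\mathbf{v}) = 1\}$, each of which is a closed subset of the Minkowski indicatrix $\{\mathbf{v} \in T_p M : F_p(\mathbf{v}) = 1\}$; the latter is compact since $F_p$ is a Minkowski norm on a finite-dimensional vector space, so each fiber is compact, and compactness of $N$ then forces $S(\nu)$ to be compact.

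Next, I would invoke \autoref{thm:cutTimePositive} at every point of $S(\nu)$ to obtain, for each $\mathbf{v} \in S(\nu)$, an open neighborhood $U_{\mathbf{v}} \subset S(\nu)$ and a number $\epsilon_{\mathbf{v}} > 0$ with $\rho(\mathbf{u}) \geq \epsilon_{\mathbf{v}}$ for all $\mathbf{u} \in U_{\mathbf{v}}$. Extracting a finite subcover $\{U_{\mathbf{v}_1}, \ldots, U_{\mathbf{v}_k}\}$ of the compact space $S(\nu)$ and setting $\epsilon \coloneqq \min_{1 \leq j \leq k} \epsilon_{\mathbf{v}_j} > 0$, I obtain $\rho(\mathbf{v}) \geq \epsilon$ for every $\mathbf{v} \in S(\nu)$, which is precisely $\mathrm{Inj}^{+}(N) \geq \epsilon > 0$. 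The consequence $d(N, \mathrm{Cu}(N)) > 0$ is then immediate from the inequality $d(N, \mathrm{Cu}(N)) \geq \mathrm{Inj}^{+}(N)$ noted just above the statement.

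I do not anticipate any real obstacle: all the substantive content already sits in \autoref{thm:cutTimePositive}, whose second clause was formulated precisely to enable this kind of finite patching (essentially the same argument appears inside the proof of \autoref{thm:tubularNBD} when the authors specialize to compact $N$, and one can also simply extract the constant $\epsilon_N$ produced there). The only minor point requiring care is compactness of the fibers of $S(\nu) \to N$, which cannot be inferred from standard vector-bundle facts since $\nu$ is only a topological submanifold of $TM$; but the argument above circumvents this by realizing $S(\nu_p)$ as a closed subset of the ambient compact indicatrix in $T_p M$.
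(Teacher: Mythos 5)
Your proof is correct, and it is in the same spirit as the paper's (both ultimately rest on the second clause of \autoref{thm:cutTimePositive} together with a finite subcover), but there is a genuine and worthwhile difference in execution. In the proof of \autoref{thm:tubularNBD}, the authors cover the base: they take the projections $V_{\mathbf{v}} = \pi(U_{\mathbf{v}}) \subset N$ and extract a finite subcover of the compact set $K \subset N$. As written, that step has a subtlety: knowing $\pi(\mathbf{v}) \in V_{\mathbf{v}_j}$ only tells you that \emph{some} vector in the fiber over $\pi(\mathbf{v})$ lies in $U_{\mathbf{v}_j}$, not that $\mathbf{v}$ itself does, so one really needs a tube-lemma type argument (cover each compact fiber $S(\nu_p)$ first, then shrink to a saturated neighborhood) to close the gap. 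Your version sidesteps this entirely by covering the total space $S(\nu)$ directly, which is cleaner and makes the finite-subcover conclusion immediate.

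The one place I would tighten your write-up is the compactness of $S(\nu)$. Compactness of the base $N$ together with compactness of each fiber $S(\nu_p)$ does not by itself force the total space to be compact (an open, fiberwise-compact subset of a bundle over a compact base can fail to be compact). The correct justification is the one you almost give: $S(\nu)$ is a \emph{closed} subset of the restricted unit tangent sphere bundle $SM|_N = \{\mathbf{v}\in T_pM \mid p\in N,\ F_p(\mathbf{v})=1\}$, which is a genuine locally trivial sphere bundle over $N$ and hence compact when $N$ is; closedness of $S(\nu)$ inside $SM|_N$ follows because the normality condition $g_{\mathbf{v}}(\mathbf{v},\mathbf{w})=0$ for all $\mathbf{w}\in T_pN$ is a closed condition on $SM|_N$ (using continuity of $\mathbf{v}\mapsto g_{\mathbf{v}}$ away from the zero section, which is all that is needed here since $F(\mathbf{v})=1$). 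With that sentence inserted, your argument is complete and, if anything, a little tidier than the route the paper takes to reach this corollary.
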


The next corollary should be compared to \cite[Theorem 3.1 and Lemma 3.2]{InnItoNagShi19}, which state the backward version of the same, under the stronger hypothesis of both directional completeness.
\begin{corollary}\label{cor:frontIsCone}
    Let $N$ be a compact submanifold of a forward complete Finsler manifold $(M, F)$. Then, for any $p \in N$, the set 
    \[\mathcal{F} = \mathcal{F}_p = \left\{ q \in M \setminus \mathrm{Cu}(N) \;\middle|\; \text{there exists a unique $N$-segment, which joins $p$ to $q$} \right\}\]
    is a topological cone on a smooth sphere of dimension $\codim N - 1$, with $p$ as the cone point. Furthermore, for any $0 < r < \mathrm{Inj}^{+}(N)$, the set $\mathcal{S}_{+}(N, r) = \left\{ x \;\middle|\; d(N, x) = r \right\}$ is diffeomorphic to $S(\nu)$, and $\mathcal{F} \cap \mathcal{S}_{+}(N, r)$ is diffeomorphic to $S(\nu_p)$.
\end{corollary}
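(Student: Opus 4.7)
The plan is to parametrize both $\mathcal{S}_{+}(N, r)$ and $\mathcal{F}_p$ via the normal exponential map, relying on \autoref{thm:tubularNBD} (tubular neighborhood), \autoref{cor:injectivityRadiusPositive} ($\mathrm{Inj}^{+}(N) > 0$ and $d(N, \mathrm{Cu}(N)) \ge \mathrm{Inj}^{+}(N)$), and the continuity of $\rho : S(\nu) \rightarrow [0, \infty]$ from hypothesis (\hyperref[eq:H]{H}), which holds since $N$ is compact and $F$ is forward complete. The identity $\mathrm{Cu}(N) = \exp^\nu(\widetilde{\mathrm{Cu}}(N))$ from forward completeness will also be used.

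For the sphere statement, fix $0 < r < \mathrm{Inj}^{+}(N)$ and consider $\Phi_r : S(\nu) \rightarrow M$ given by $\Phi_r(\mathbf{v}) = \exp^\nu(r \mathbf{v})$. Surjectivity onto $\mathcal{S}_{+}(N, r)$ follows from forward completeness combined with compactness of $N$: the distance $r$ is realized by an $N$-segment with initial velocity in $S(\nu)$. Injectivity is the broken-geodesic shortening argument: two distinct $\mathbf{v} \ne \mathbf{w}$ with $\Phi_r(\mathbf{v}) = \Phi_r(\mathbf{w})$ would make the common endpoint a separating point with two $N$-segments of length $r$, forcing $\rho(\mathbf{v}), \rho(\mathbf{w}) \le r$ and contradicting $r < \mathrm{Inj}^{+}(N)$. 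Picking any $\epsilon \in (r, \mathrm{Inj}^{+}(N))$, \autoref{thm:tubularNBD} produces a tubular neighborhood containing the radial slice $\{r \mathbf{v} \;|\; \mathbf{v} \in S(\nu)\}$, on which $\exp^\nu$ is a diffeomorphism off the zero section; composing with the scaling $\mathbf{v} \mapsto r \mathbf{v}$ yields the diffeomorphism $S(\nu) \rightarrow \mathcal{S}_{+}(N, r)$. For the slice at $p$, the bound $d(N, \mathrm{Cu}(N)) > r$ gives $\mathcal{S}_{+}(N, r) \cap \mathrm{Cu}(N) = \emptyset$, so each $\Phi_r(\mathbf{v})$ admits the unique $N$-segment $\gamma_\mathbf{v}|_{[0, r]}$ starting at $\pi(\mathbf{v})$. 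Hence $\Phi_r(\mathbf{v}) \in \mathcal{F}_p$ iff $\mathbf{v} \in S(\nu_p)$, and $\Phi_r|_{S(\nu_p)}$ is the required diffeomorphism onto $\mathcal{F}_p \cap \mathcal{S}_{+}(N, r)$.

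For the cone structure of $\mathcal{F}_p$, set $D_p = \{t\mathbf{v} \;|\; \mathbf{v} \in S(\nu_p),\ 0 \le t < \rho(\mathbf{v})\} \subset \nu_p$, an open star-shaped neighborhood of $0$. A repetition of the broken-geodesic argument shows $\gamma_\mathbf{v}(t) \notin \mathrm{Cu}(N)$ whenever $t < \rho(\mathbf{v})$ (any competing $N$-segment to $\gamma_\mathbf{v}(t)$ would force $\rho(\mathbf{v}) \le t$), so $\exp^\nu(D_p) \subset \mathcal{F}_p$; conversely every $q \in \mathcal{F}_p$ is reached by its unique $N$-segment of length $d(N, q)$, with $d(N, q) < \rho$ of its initial velocity since $q \notin \mathrm{Cu}(N) = \exp^\nu(\widetilde{\mathrm{Cu}}(N))$, giving surjectivity, while uniqueness of $N$-segments at non-cut points gives injectivity. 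For continuity of the inverse, take $q_n \rightarrow q$ in $\mathcal{F}_p$ with preimages $t_n \mathbf{v}_n$: the times $t_n = d(N, q_n)$ converge to $t = d(N, q)$ by continuity of the distance function, and the directions $\mathbf{v}_n \in S(\nu_p)$ admit a convergent subsequence by compactness of $S(\nu_p)$; any limit $\mathbf{v}'$ satisfies $\exp^\nu(t \mathbf{v}') = q$, forcing $\mathbf{v}' = \mathbf{v}_q$ by uniqueness. Thus $\exp^\nu|_{D_p} : D_p \rightarrow \mathcal{F}_p$ is a homeomorphism. Finally, continuity and strict positivity of $\rho|_{S(\nu_p)}$ permit a radial reparametrization of $D_p$ (for instance $t\mathbf{v} \mapsto [\mathbf{v},\ \tanh(t)/\tanh(\rho(\mathbf{v}))]$ with the convention $\tanh(\infty) = 1$) onto the open cone $S(\nu_p) \times [0, 1)/(S(\nu_p) \times \{0\})$, sending $0$ to the cone point; composing with $\exp^\nu|_{D_p}$ identifies $\mathcal{F}_p$ as the open cone on $S(\nu_p)$ with $p$ as the cone point.

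The main obstacle I anticipate is the continuity of the inverse of $\exp^\nu|_{D_p}$. This map is \emph{not} a restriction of a global diffeomorphism because $\mathcal{F}_p$ can extend far beyond any tubular neighborhood, and $D_p$ is unbounded along directions with $\rho = \infty$. Compactness of $S(\nu_p)$, together with continuity of $\rho$, $d(N, \cdot)$ and $\exp^\nu$, plus the uniqueness of $N$-segments outside $\mathrm{Cu}(N)$, is what makes the subsequence argument close.
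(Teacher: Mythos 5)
Your proof is correct and follows essentially the same route as the paper: both parametrize $\mathcal{F}_p$ as the homeomorphic image under $\exp^\nu$ of the star-shaped set $\{t\mathbf{v} : \mathbf{v}\in S(\nu_p),\ 0\le t<\rho(\mathbf{v})\}$ (your $D_p$, the paper's $\mathcal{C}_p$) and $\mathcal{S}_{+}(N,r)$ as the diffeomorphic image of $\{r\mathbf{v} : \mathbf{v}\in S(\nu)\}$, using \autoref{thm:tubularNBD} together with continuity and positivity of $\rho$. Your compactness-and-subsequence argument for continuity of the inverse of $\exp^\nu|_{D_p}$ supplies a detail the paper leaves implicit, as it simply refers back to the proof of \autoref{thm:tubularNBD}, where the homeomorphism comes from invariance of domain on an open subset of $\nu$ and then restricts to the slice $\mathcal{C}_p$.
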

\begin{proof}
    Consider the set 
    \[\mathcal{C} = \mathcal{C}_p = \left\{ t\mathbf{v} \;\middle|\; \mathbf{v}\in S(\nu_p), \; 0 \le t < \rho(\mathbf{v}) \right\}.\]
    Since the hypothesis (\hyperref[eq:H]{H}) is satisfied, $\rho$ is continuous, and by \autoref{thm:cutTimePositive} it is strictly positive. Hence, $\mathcal{C}$ is a topological cone on the smooth sphere $S(\nu_p)$ of dimension $\codim N - 1$. But as observed in the proof of \autoref{thm:tubularNBD}, we see that $\exp^\nu$ restricts to a homeomorphism on $\mathcal{C}$ with image $\mathcal{F}$.
    
    For any $r > 0$, the set $\mathfrak{S}(\nu, r) = \left\{ r \mathbf{v} \;\middle|\; \mathbf{v} \in S(\nu) \right\}$ is clearly diffeomorphic to $S(\nu)$. As noted in \autoref{cor:injectivityRadiusPositive}, we have $\mathrm{Inj}^{+}(N) > 0$. Hence, the same argument as in \autoref{thm:tubularNBD} shows that for any $0 < r < \mathrm{Inj}^{+}(N)$, the map $\exp^\nu$ is a diffeomorphism restricted to $\mathfrak{S}(\nu, r)$ with image $\mathcal{S}_{+}(N, r)$. Clearly, $\mathcal{F} \cap \mathcal{S}_{+}(N, r)$ is the image of the set $\mathfrak{S}_p(\nu, r) = \left\{ r \mathbf{v} \;\middle|\; \mathbf{v} \in S(\nu_p) \right\}$ under $\exp^\nu$. This concludes the proof.
\end{proof}

Note that the above theorem cannot be true in the absence of completeness. Indeed, consider $M = \mathbb{R}^2 \setminus {(1,0)}$, and take $N = \left\{ p = (0,0) \right\}$. Then, we have $\mathcal{F}_p = \mathbb{R}^2 \setminus \left\{ (x,0) \;\middle|\; x \ge 1 \right\}$, which clearly is not a topological cone. In this example, $\rho$ fails to be continuous.

\section{Relation to the Existing Literature}\label{sec:existingLit}
In this section, we discuss the how the present work relates to the existing literature, specifically to \cite{Alves2019} and \cite{InnItoNagShi19}.

\subsection{Relation with the Article \texorpdfstring{\cite{Alves2019}}{} by Alves and Javaloyes}\label{sec:javaloyes} As noted earlier, the main results in this article already appeared in \cite{Alves2019}. In particular, in \cite[Theorem 3.1]{Alves2019} the authors claimed that given any $\mathbf{v}_0 \in S(\nu)$, there exists some $\epsilon > 0$ and some neighborhood $\mathbf{v}_0 \in U \subset S(\nu)$, such that $\rho(\mathbf{u}) > \epsilon$ for all $\mathbf{u} \in U$. Their proof strategy there was the following. Given any submanifold $P$ of $M$, they claimed to produce codimension $1$ orientable submanifolds $\tilde{P}_{\mathbf{v}}$, parametrized by $\mathbf{v} \in S(\nu)$, such that $P \subset \tilde{P}_{\mathbf{v}}$, and moreover, $\mathbf{v}$ is in the normal cone of $\tilde{P}_{\mathbf{v}}$. Then, the problem of proving that \emph{the cut time of $\mathbf{v}$ associated to $P$ is positive} reduces to proving that \emph{the cut time of $\mathbf{v}$ associated to $\tilde{P}_{\mathbf{v}}$ is positive}. Indeed, suppose the distance from $\tilde{P}_{\mathbf{v}}$ to some point $x \in M \setminus \tilde{P}_{\mathbf{v}}$ is achieved along a minimizer $\gamma$ with initial velocity $\mathbf{v}$. Then, $P \subset \tilde{P}_{\mathbf{v}}$ implies that the distance from $P$ to $x$ is also achieved along the same $\gamma$.

Let us point out that for an arbitrary submanifold $P$, it may not be contained in a codimension one submanifold, as the next example demonstrates \footnote{
    Over an email correspondence, Prof. M.A. Javaloyes confirmed that they had claimed $P \subset \tilde{P}_{\mathbf{v}}$ to simplify the construction, but their proof was completely local in nature.}.

\begin{example}\label{example:sphereNoHypersurface}
    Let $M = TS^2$ be the tangent bundle of the $2$-sphere, and $N$ be the $0$-section in $M$, in particular, $N \cong S^2$. Suppose we have some $P \subset M$ of codimension $1$, with $N \subset P$. Then at each $x \in N$, we have 
    \[T_x N \subsetneq T_x P \subsetneq T_x M \Rightarrow T_x P / T_x N \subsetneq T_x M / T_x N,\]
    with each subspace having constant dimensions. In particular, we have a rank $1$ vector sub-bundle $\xi \coloneqq TP|_N / TN \subset TM|_N / N$. Since $N$ is the $0$-section of the vector bundle $M$, we have $TM|_N \cong TN \oplus M \Rightarrow TM|_N / TN \cong M$. Thus, we have $\xi$ is a vector sub-bundle of $M = TS^2$. Let us write, $TS^2 = \xi \oplus \eta$. We compute the Euler characteristic $e(TS^2) = e(\xi) \smile e(\eta) = 0$, since $e(\xi), e(\eta) \in H^1(S^2) = 0$. This is a contradiction, as $e(TS^2) = 2$. Thus, there cannot be a hypersurface of $TS^2$ containing the $0$-section.
\end{example}

Now, in the course of the proof of \cite[Theorem 3.1]{Alves2019}, the authors ended up producing $\tilde{P}_{\mathbf{v}}$ satisfying $P \cap W \subset \tilde{P}_{\mathbf{v}}$, where $W$ is some coordinates chart on $M$. Consequently, for the same $x$ and $\gamma$ as above, they obtain that the distance from $P \cap W$ to $x$ is achieved along $\gamma$, whereas we can possibly have $d(P, x) \lneq d(P \cap W, x) = d(\tilde{P}_{\mathbf{v}}, x)$ (see \autoref{fig:globalMinima}). In other words, only a \emph{local} minima of the distance function $f(p) = d(p, x)$ is achieved on $P$. Note that this is comparable to the first step in our proof of \autoref{thm:cutTimePositive}. Choosing $W$ suitably smaller, the authors were able to assert that the distance from $P$ to $x$ can actually be achieved from $P \cap W$. Indeed, this is very similar to \autoref{lemma:globalMinima} and \autoref{lemma:uniformPositive} as in the proof of \autoref{thm:cutTimePositive}.

\subsection{Relation with the Article \texorpdfstring{\cite{InnItoNagShi19}}{} by Innami et al.}\label{sec:sphereCondition}
As noted earlier, the proof of \autoref{thm:cutTimePositive} is inspired by the statement (\hyperref[eq:innamiStatement]{S}). The crux of the argument, as in \autoref{lemma:localMinima}, is that a sufficiently small sphere has large principal curvature, whereas that of the submanifold $N$ is bounded above in a compact neighborhood. Now, in order to define the principal curvature, we require at least $C^2$-smooth data. Indeed, the next example shows that the statement (\hyperref[eq:innamiStatement]{S}) can be false in low regularity.

\begin{figure}[H]
    \centering
    \def\svgwidth{0.25\columnwidth}
    \import{./figures/}{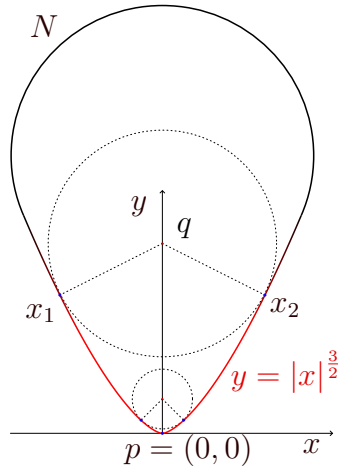}

    \caption{A counterexample : we always have $q$ arbitrarily near to $p$ for which the distance to $N$ is achieved at two distinct points.}
    \label{fig:counterexample}
\end{figure}

\begin{example}\label{example:ellipseParallelCurve}
    Let us consider a closed curve in $\mathbb{R}^2$ which is $C^1$ but not $C^2$ at some point. As a concrete example, in \autoref{fig:counterexample}, we have taken the curve $y = \left\lvert x \right\rvert^{\frac{3}{2}}$ in some neighborhood of the point $p = (0, 0)$, and closed it off smoothly. Denote this curve by $N$, which is then an embedded hypersurface in $\mathbb{R}^2$. It follows that $N$ is $C^1$, but it fails to be $C^2$ at $p$. Now, for any $q \ne p$ on the $y$-axis sufficiently near $p$, there are two points on $N$, say, $x_1, x_2$ which are nearest to $q$. Thus, $q \in \mathrm{Se}(N)$, and hence, in the limit we have $p \in \mathrm{Cu}(N)$. In other words, for all such $q$, with $d(N, q)$ arbitrarily small, the sphere $S(q, d(N,q))$ intersects $N$ in two distinct points. This clearly contradicts the statement (\hyperref[eq:innamiStatement]{S})\footnote{In a personal communication, Prof. N. Innami suggested considering \emph{the parallel curve to an ellipse passing through one of its focal points}, which is also known to be $C^1$ but not $C^2$.}.
\end{example}

In this context, let us recall the related notion of interior and exterior sphere conditions. Suppose $\Omega \subset \mathbb{R}^n$ is a region, i.e., a connected and bounded open set. Recall, its topological boundary is given as $\partial \Omega = \overline{\Omega} \cap \overline{\mathbb{R}^n \setminus \Omega} = \overline{\Omega} \setminus \Omega$.
\begin{defn}\label{defn:sphereCondition}
    $\Omega$ is said to satisfy the \emph{interior} (resp. \emph{exterior}) \emph{sphere condition} at some $p \in \partial \Omega$ if there exists some $r = r(p) > 0$ and some $x$ such that the closed euclidean ball $B(x, r)$ satisfies $B(x, r) \subset \Omega$ (resp. $B(x, r) \subset \Omega^c$), and $p \in \partial B(x, r)$. If $r > 0$ can be chosen independent of $p \in \partial \Omega$, then $\Omega$ is said to satisfy the \emph{uniform} interior (resp. exterior) sphere condition.
\end{defn}

In \autoref{example:ellipseParallelCurve} we see that if $\partial \Omega$ is only $C^1$-smooth, then it might not even satisfy the sphere condition at every point. Now, it is clear that $\Omega$ satisfies the interior (resp. exterior) sphere condition at some $p \in \partial \Omega$ if for the inward (resp. outward) normal vector $\mathbf{n}$ of $\partial \Omega$ we have $\rho(\mathbf{n}) > 0$. Note that as $\Omega$ is bounded, we have $\partial \Omega$ is compact. Thus, if we assume that $\partial \Omega$ is sufficiently smooth (in fact, $C^2$-smooth suffices \cite[Lemma 14.16]{GilTru01}), then it follows from \autoref{thm:cutTimePositive} that $\Omega$ satisfies both the uniform interior and uniform exterior sphere condition. 

\begin{remark}\label{rmk:uniformSphereCondtion}
    It is known that $\Omega$ satisfies both the uniform interior and exterior condition, if and only if, $\partial \Omega$ is $C^{1,1}$ regular, i.e., locally given as the zero set of a $C^{1,1}$-regular function. A sketch of proof of this can be found in \cite[Theorem 1.0.9, pg. 7]{Barb09}. Recall that a real valued function $f$ is called $C^{1, \alpha}$-regular if it is $C^1$, and $df$ is Holder-$\alpha$ continuous. In particular, $C^{1,1}$-regularity implies $df$ is Lipschitz, and hence, differentiable almost everywhere.
\end{remark}
\ifarxiv
\appendix\section{Curvature of Small Geodesic Spheres}\label{sec:curvatureSmallBall}

In this section, we give a detailed proof of \autoref{lemma:smallBallPrincipalCurvature}. We shall require few more basic concepts from Finsler geometry.

\subsection{Reverse Finsler Metric}
Recall the \emph{reverse Finsler metric} $\bar{F}$ defined as $\bar{F}(\mathbf{v}) = F(-\mathbf{v})$ for $\mathbf{v} \in TM$. We have the following easy observation.
\begin{prop}\label{prop:reverseFinslerRelations}
    Denoting all the quantities associated to $\bar{F}$ by $\bar{g}, \bar{C}, \bar{\nabla}, \bar{R}, \bar{K}$, we have
    \[\bar{g}_{\mathbf{v}} = g_{-\mathbf{v}}, \quad \bar{C}_{\mathbf{v}} = -C_{-\mathbf{v}}, \quad \bar{\nabla}^{V} = \nabla^{-V}, \quad \bar{R}^{V} = R^{-V}, \quad \bar{K}^V = K^{-V},\]
    for any $\mathbf{v} \in \widehat{TM}$ and for any $V \in \Gamma \widehat{TM}$.
\end{prop}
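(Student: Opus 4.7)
The plan is to establish the five identities in the stated order, since each one feeds into the next. Every identity ultimately comes from the substitution $\mathbf{v}\mapsto -\mathbf{v}$ inside a defining formula, and the only signs that appear are dictated by the parity of the number of partial derivatives involved.

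First, for the fundamental tensor, I would unwind $\bar{g}_{\mathbf{v}}$ from its definition applied to $\bar{F}(\mathbf{w})=F(-\mathbf{w})$ and perform the change of variables $u_i=-s_i$ inside the double partial derivative. The two sign flips cancel, producing exactly $g_{-\mathbf{v}}$. The analogous computation for $\bar{C}_{\mathbf{v}}$ uses a triple derivative, so three sign flips leave an overall minus sign, giving $\bar{C}_{\mathbf{v}}=-C_{-\mathbf{v}}$. These are essentially bookkeeping.

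The main (and only nontrivial) step is the identity $\bar{\nabla}^{V}=\nabla^{-V}$. Here I would appeal to the uniqueness of the Chern connection from \autoref{defn:chernConnection}: rather than building $\bar{\nabla}^V$ from scratch, it suffices to verify that $\nabla^{-V}$ already satisfies both defining axioms for $\bar{\nabla}^V$. The torsion-free axiom is automatic since it does not involve the reference vector. For the almost-metric-compatibility axiom, I would combine the first two identities with the Chern axiom of $\nabla^{-V}$ relative to $g_{-V}$. The subtle point to track is the Cartan-tensor term: the axiom for $\bar{\nabla}^V$ carries $2\,\bar{C}_V\!\left(\bar{\nabla}^V_X V,\cdot,\cdot\right)$, whereas the Chern axiom for $\nabla^{-V}$ with reference vector $-V$ naturally produces $2\,C_{-V}\!\left(\nabla^{-V}_X(-V),\cdot,\cdot\right)$. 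The two extra signs, one from $\bar{C}_V=-C_{-V}$ and one from $\nabla^{-V}_X(-V)=-\nabla^{-V}_X V$, cancel precisely, so the axioms match and uniqueness forces $\bar{\nabla}^V=\nabla^{-V}$.

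The remaining two identities are then formal. The curvature expression \autoref{eq:curvatureTensor} is polynomial in $\nabla^V$, so $\bar{R}^V=R^{-V}$ follows at once. Substituting $\bar{g}_V=g_{-V}$ and $\bar{R}^V=R^{-V}$ into the flag-curvature formula \autoref{eq:flagCurvature} yields $\bar{K}^V=K^{-V}$. The expected obstacle is entirely concentrated in the sign-tracking of step three; the rest is routine.
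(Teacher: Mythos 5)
Your proposal is correct and follows essentially the same route as the paper: both prove $\bar{g}_{\mathbf{v}}=g_{-\mathbf{v}}$ and $\bar{C}_{\mathbf{v}}=-C_{-\mathbf{v}}$ by direct substitution of $-\mathbf{v},-\mathbf{v}_i$ into the defining derivative formulas, establish $\bar{\nabla}^V=\nabla^{-V}$ by checking that $\nabla^{-V}$ satisfies the two Chern axioms for $\bar{F}$ (with the same double sign cancellation in the Cartan term that you flag) and invoking uniqueness, and then deduce $\bar{R}^V=R^{-V}$ and $\bar{K}^V=K^{-V}$ formally from the preceding identities.
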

\begin{proof} 
    For $\mathbf{v} \in T_p M \setminus \left\{ 0 \right\}, \mathbf{v}_1, \mathbf{v}_2 \in T_p M$ we have
    \begin{align*}
        \bar{g}_{\mathbf{v}}(\mathbf{v}_1, \mathbf{v}_2) 
        &= \left. \frac{1}{2}\frac{\partial^2}{\partial s_1 \partial s_2} \right|_{s_1 = s_2 = 0} \left( \bar{F}_p \left( \mathbf{v} + s_1 \mathbf{v}_1 + s_2 \mathbf{v}_2 \right) \right)^2 \\
        &= \left. \frac{1}{2}\frac{\partial^2}{\partial s_1 \partial s_2} \right|_{s_1 = s_2 = 0} \left( F_p \left( - \mathbf{v} - s_1 \mathbf{v}_1 - s_2 \mathbf{v}_2 \right) \right)^2 \\
        &= g_{-\mathbf{v}}\left( -\mathbf{v}_1, -\mathbf{v}_2 \right) \\
        &= g_{-\mathbf{v}}(\mathbf{v}_1, \mathbf{v}_2)
    \end{align*}
    Hence, $\bar{g}_{\mathbf{v}} = g_{-\mathbf{v}}$. Similarly, for $\mathbf{v} \in T_p M \setminus \left\{ 0 \right\}, \mathbf{v}_1, \mathbf{v}_2, \mathbf{v}_3 \in T_p M$ we have 
    \[\bar{C}_{\mathbf{v}}\left( \mathbf{v}_1, \mathbf{v}_2, \mathbf{v}_3 \right) = C_{-\mathbf{v}}\left( -\mathbf{v}_1, -\mathbf{v}_2, -\mathbf{v}_3 \right) = - C_{-\mathbf{v}}\left( \mathbf{v}_1, \mathbf{v}_2, \mathbf{v}_3 \right),\]
    and consequently, $\bar{C}_{\mathbf{v}} = -C_{-\mathbf{v}}$.
    
    Next, for arbitrary $X, Y, Z \in \Gamma TM$, we have
    \begin{align*}
        X\left( \bar{g}_V(Y, Z) \right) 
        &= X \left( g_{-V}(Y, Z) \right) \\ 
        &= g_{-V}\left( \nabla^{-V}_X Y, Z \right) + g_{-V}\left( Y, \nabla^{-V}_X Z \right) + 2 C_{-V}(\nabla^{-V}_X(-V), Y, Z) \\
        &= \bar{g}_V \left( \nabla^{-V}_X Y, Z \right) + \bar{g}_V \left( Y, \nabla^{-V}_X Z \right) - 2 C_{-V}\left( \nabla^{-V}_X V, Y, Z \right) \\
        &= \bar{g}_V \left( \nabla^{-V}_X Y, Z \right) + \bar{g}_V \left( Y, \nabla^{-V}_X Z \right) + 2 \bar{C}_V\left( \nabla^{-V}_X V, Y, Z \right) 
    \end{align*}
    Clearly,  $\nabla^{-V}_X Y - \nabla^{-V}_Y X = [Y,X]$ holds. Hence, $\bar{\nabla}^V = \nabla^{-V}$.
    
    Now, for the curvature tensor, fix some $V \in \Gamma \widehat{TM}$. We have 
    \begin{align*}
        \bar{R}^V(X, Y) Z &= \bar{\nabla}^V_X \bar{\nabla}^V_Y Z - \bar{\nabla}^V_Y \bar{\nabla}^V_X Z - \bar{\nabla}^V_{[X, Y]} Z \\
        &= \nabla^{-V}_X \nabla^{-V}_Y Z - \nabla^{-V}_X \nabla^{-V}_Y Z - \nabla^{-V}_{[X, Y]} Z \\
        &= R^{-V}(X, Y)Z,
    \end{align*}
    which proves that $\bar{R}^V = R^{-V}$.

    Lastly, for $V, W \in \Gamma \widehat{TM}$ spanning a $2$-plane field $\sigma = \mathrm{Span}\left\langle V, W \right\rangle$, we have the flag curvature
    \begin{align*}
        \bar{K}^V(\sigma)
        &= \frac{\bar{g}_V\left( \bar{R}^V(V, W)W, V \right)}{\bar{g}_V(V,V)\bar{g}_V(W,W) - \bar{g}_V(V,W)^2} \\
        &= \frac{g_{-V}\left( R^{-V}(V, W)W, V \right)}{g_{-V}(V,V)g_{-V}(W,W) - g_{-V}(V,W)^2} \\
        &= \frac{g_{-V}\left( R^{-V}(-V, W)W, -V \right)}{g_{-V}(-V,-V)g_{-V}(W,W) - g_{-V}(-V, W)^2} \\
        &=K^{-V}(\sigma).
    \end{align*}
    This concludes the proof.
\end{proof}

As a direct corollary to \autoref{prop:shapeOperator} and \autoref{prop:reverseFinslerRelations}, we now have the following.
\begin{corollary}\label{cor:shapeOperatorReversedFinsler}
    Let $\mathbf{n} \in \nu_p(N)$ for $p \in N$. If we denote by $\bar{A}_{-\mathbf{n}} : T_p N \to T_p N$ the shape operator with respect to the reversed Finsler metric $\bar{F}$ along $-\mathbf{n}$, then we have $\bar{A}_{-\mathbf{n}} = - A_{\mathbf{n}}$.
\end{corollary}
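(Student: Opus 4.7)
The plan is to apply Proposition \ref{prop:shapeOperator} directly to the reversed Finsler metric $\bar{F}$ and then translate every ingredient back to the original metric via Proposition \ref{prop:reverseFinslerRelations}. The corollary should essentially follow by bookkeeping of signs, once one verifies that the orthogonal decomposition used to define the shape operator is intrinsically the same on both sides.

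First, I would check that $-\mathbf{n} \in \bar{\nu}_p(N)$, so that $\bar{A}_{-\mathbf{n}}$ is defined. This is immediate: for any $\mathbf{w} \in T_p N$, $\bar{g}_{-\mathbf{n}}(-\mathbf{n}, \mathbf{w}) = g_{\mathbf{n}}(-\mathbf{n}, \mathbf{w}) = -g_{\mathbf{n}}(\mathbf{n},\mathbf{w}) = 0$. More importantly, the same computation shows that the orthogonal complement to $T_p N$ appearing in the canonical decomposition \autoref{eq:canonicalDecomposition} is unchanged:
\begin{equation*}
    (T_p N)^{\perp_{\bar{g}_{-\mathbf{n}}}} = (T_p N)^{\perp_{g_{\mathbf{n}}}},
\end{equation*}
so the two projection operators $(\cdot)^{\top_{-\mathbf{n}}}$ (taken with respect to $\bar{F}$) and $(\cdot)^{\top_{\mathbf{n}}}$ (with respect to $F$) coincide on $T_p M$.

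Next, given a local extension $\tilde{\mathbf{n}} \in \Gamma\hat{\nu}(N)$ of $\mathbf{n}$, the vector field $-\tilde{\mathbf{n}}$ is a local extension of $-\mathbf{n}$ lying in $\Gamma\hat{\bar{\nu}}(N)$ (by the same observation as above, applied fiberwise). Applying \autoref{prop:shapeOperator} to $\bar{F}$ and using the identity $\bar{\nabla}^{-\mathbf{n}} = \nabla^{-(-\mathbf{n})} = \nabla^{\mathbf{n}}$ from \autoref{prop:reverseFinslerRelations}, I compute
\begin{equation*}
    \bar{A}_{-\mathbf{n}}(\mathbf{x}) = -\bigl(\bar{\nabla}^{-\mathbf{n}}_{\mathbf{x}}(-\tilde{\mathbf{n}})\bigm|_p\bigr)^{\top_{-\mathbf{n}}} = -\bigl(-\nabla^{\mathbf{n}}_{\mathbf{x}}\tilde{\mathbf{n}}\bigm|_p\bigr)^{\top_{\mathbf{n}}} = \bigl(\nabla^{\mathbf{n}}_{\mathbf{x}}\tilde{\mathbf{n}}\bigm|_p\bigr)^{\top_{\mathbf{n}}} = -A_{\mathbf{n}}(\mathbf{x}),
\end{equation*}
which is the claim. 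There is essentially no obstacle here; the only point that deserves a line of verification is the coincidence of the two orthogonal complements, since without that observation the two projection operators would be a priori different and one could not convert one shape operator into the other. The remainder is simply tracking the two minus signs coming from the definition of the shape operator and from negating the extension.
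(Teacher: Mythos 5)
Your proof is correct and follows essentially the same route as the paper's: both apply \autoref{prop:shapeOperator} to $\bar{F}$, use $\bar{\nabla}^{V} = \nabla^{-V}$ from \autoref{prop:reverseFinslerRelations}, and track the two sign changes. You are slightly more explicit than the paper in verifying that $\bar{g}_{-\mathbf{n}} = g_{\mathbf{n}}$ makes the two tangential projections coincide, which is a worthwhile remark but not a different approach.
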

\begin{proof}
    Since $\bar{g}_{-\mathbf{n}} = g_{\mathbf{n}}$, we have $-\mathbf{n} \in \bar{\nu}_p(N)$, where $\bar{\nu}_p(N)$ is the normal cone of $N$ at $p$, with respect to $\bar{F}$. Then for any $\mathbf{x} \in T_p N$, we have $\bar{A}_{-\mathbf{n}}(\mathbf{x}) = - \left( \bar{\nabla}^{-\tilde{\mathbf{n}}}_X (-\tilde{\mathbf{n}}) \middle|_p \right)^{\top_{\bar{g}_{-\mathbf{n}}}} = \left( \nabla^{\tilde{\mathbf{n}}}_X \tilde{\mathbf{n}} \right)^{\top_{g_{\mathbf{n}}}} = -A_{\mathbf{n}}(\mathbf{x})$, which concludes the proof.
\end{proof}

\subsection{$N$-Jacobi Fields}
Using the Chern connection, we define a covariant derivative along a curve.

\begin{defn}\label{defn:covariantDerivative}\cite{Javaloyes2014}
    Given a curve $\gamma : [a,b] \rightarrow  M$ and $W \in \Gamma \gamma^*\widehat{TM}$, the \emph{covariant derivative} along $\gamma$ is defined as \[D^W_\gamma : \Gamma \gamma^* TM \rightarrow \Gamma \gamma^* TM,\]
    which satisfies the following.
    \begin{itemize}
        \item For $X, Y \in \Gamma \gamma^*TM$, and $\alpha, \beta \in \mathbb{R}$ we have 
        $D^W_\gamma(\alpha X + \beta Y) = \alpha D^W_\gamma X + \beta D^W_\gamma Y$.

        \item For $X \in \Gamma \gamma^*TM$, and $f : [a,b] \rightarrow \mathbb{R}$, we have $D^W_\gamma(f X) = \frac{df}{dt} X + f D^W_\gamma X$.
        
        \item For $X, Y \in \Gamma \gamma^* TM$, we have \[\frac{d}{dt}g_W(X,Y) = g_W \left( D^W_{\gamma} X, Y \right) + g_W \left( X, D^W_\gamma Y \right) + 2 C_W \left( D^W_\gamma W, X, Y \right).\]
    \end{itemize}
\end{defn}

A vector field $J$ along a geodesic $\gamma : [a, b] \rightarrow M$, is said to be a \emph{Jacobi field} if it satisfies the second order differential equation, called the \emph{Jacobi equation}, 
\[D^{\dot\gamma}_\gamma D^{\dot\gamma}_\gamma J - R^{\dot\gamma}(\dot\gamma, J)\dot\gamma = 0.\]
We refer to \cite{Javaloyes2014,Javaloyes2014a} for the definition of the curvature tensor $R^{\dot \gamma}(\dot\gamma, J) \dot \gamma$. Alternatively, every Jacobi field $J$ along $\gamma$ is given by a geodesic variation $\Lambda : (-\epsilon, \epsilon) \times [a,b] \rightarrow M$ of $\gamma$, via the equation
\[J(t) \coloneqq \left. \frac{\partial}{\partial s} \right|_{s=0} \Lambda(s,t), \quad t\in [a,b].\]
Recall that $\Lambda$ is a \emph{geodesic variation} of $\gamma$ if for each $s$ fixed, $\Lambda_s \coloneqq \Lambda(s, \_)$ is a geodesic, with $\Lambda_0 = \gamma$. Since the Jacobi equation is a second order ODE, given the initial data, $\mathbf{u}, \mathbf{v} \in T_{\gamma(a)}M$, there exists a unique Jacobi field $J$ along $\gamma$ satisfying, $J(a) = \mathbf{u}$ and $D^{\dot\gamma}_\gamma J(a) = \mathbf{v}$. Consequently, the collection of all Jacobi fields along $\gamma$ forms a vector space of dimension $2 \dim M$.

Given a unit-speed $N$-geodesic $\gamma : [a,b]\rightarrow M$, a vector field $J \in \Gamma\gamma^*TM$ is called an \emph{$N$-Jacobi field} if it satisfies the following initial value problem
\begin{equation}\label{eq:NJacobiEquation}
    D^{\dot\gamma}_\gamma D^{\dot\gamma}_\gamma J - R^{\gamma}(\dot\gamma, J) \dot\gamma = 0, \quad J(a)\in T_{\gamma(a)} N, \quad \dot J(a) + A_{\dot\gamma(a)} \left( J(a) \right) \in \left( T_p N \right)^{\perp_{g_{\mathbf{\dot\gamma(a)}}}}.
\end{equation}
Every $N$-Jacobi field $J$ arise from an $N$-geodesic variation $\Lambda : (-\epsilon, \epsilon) \times [a,b] \rightarrow M$ as
\begin{equation}\label{eq:jacobiVariation}
    J(t) = \left.\frac{\partial}{\partial s}\right|_{s=0} \Lambda(s,t), \quad t\in[a,b].
\end{equation}

\subsection{Estimation of the Principal Curvatures of Small Backward Spheres}
We are now in position to give a proof of \autoref{lemma:smallBallPrincipalCurvature}
\begin{proof}[Proof of \autoref{lemma:smallBallPrincipalCurvature}]
    Since $r < \mathrm{Inj}^{-}(q)$, we have $S$ is a codimension $1$ submanifold. Let $p \in S$ and $\gamma : [0, r] \to M$ be the radial geodesic joining $p = \gamma(0)$ to $q = \gamma(r)$. Let $\mathbf{n} = \dot\gamma(0)$, and note that $\mathbf{n} \in \nu_p(S)$. We have $\bar{\gamma}(t) = \gamma(r - t)$ is a geodesic with respect to the reversed Finsler metric $\bar{F}$, joining $q$ to $p$, and furthermore $-\mathbf{n} = \dot{\bar{\gamma}}(r) \in \bar{\nu}_p(S)$. Note that $r < \mathrm{Inj}^{+}(q)$ implies in particular that $p$ cannot be a conjugate point of $q$ along $\bar{\gamma}$. Let $\kappa$ be an eigenvalue of $A_{\mathbf{n}}$ with eigenvector $\mathbf{v} \in T_p S$. Then, by \autoref{cor:shapeOperatorReversedFinsler} we have, $\bar{A}_{-\mathbf{n}} \mathbf{v} = -A_{\mathbf{n}} \mathbf{v} = -\kappa \mathbf{v}$. Consider the unique Jacobi field $J$ along $\bar{\gamma}$ with $J(0) = 0$ and $J(r) = \mathbf{v}$, which exists since $p$ is not conjugate to $q$ along $\bar{\gamma}$. Since the Chern connection is torsion free, it follows from \cite{Javaloyes2014} that
    \[D^{\dot{\bar{\gamma}}}_{\bar{\gamma}}J(r) = \nabla^{\dot{\bar{\gamma}}}_{\dot{\bar{\gamma}}} J |_r = \nabla^{\dot{\bar{\gamma}}}_J \dot{\bar{\gamma}} |_r.\] 
    Hence, we have
    \[\bar{g}_{-\mathbf{n}}(\bar{A}_{-\mathbf{n}} \mathbf{v}, \mathbf{v}) = \bar{g}_{-\mathbf{n}}\left( - \left( \bar{\nabla}^{\dot{\bar{\gamma}}}_{\mathbf{v}} \dot{\bar{\gamma}} \middle|_p \right)^{\top_{\bar{g}_{-\mathbf{n}}}}, \mathbf{v} \right) = - \bar{g}_{ -\mathbf{n}}\left( \bar{\nabla}^{\dot{\bar{\gamma}}}_{\mathbf{v}} \dot{\bar{\gamma}} \middle|_p, \mathbf{v} \right) = -g_{-\mathbf{n}}\left( D^{\dot{\bar{\gamma}}}_{\bar{\gamma}} J(r), J(r) \right),\]
    and thus,
    \[-\kappa = \frac{\bar{g}_{-\mathbf{n}}(\bar{A}_{-\mathbf{n}} \mathbf{v}, \mathbf{v})}{\bar{g}_{-\mathbf{n}}(\mathbf{v}, \mathbf{v})} = - \frac{\bar{g}_{-\mathbf{n}}\left( D^{\dot{\bar{\gamma}}}_{\bar{\gamma}} J(r), J(r) \right)}{\bar{g}_{-\mathbf{n}}\left( J(r), J(r) \right)} \Rightarrow \kappa = \frac{\bar{g}_{-\mathbf{n}}\left( D^{\dot{\bar{\gamma}}}_{\bar{\gamma}} J(r), J(r) \right)}{\bar{g}_{-\mathbf{n}}\left( J(r), J(r) \right)}.\]
    Now, from our hypothesis, the flag curvature of $M$ with respect to $F$, and hence by \autoref{prop:reverseFinslerRelations}, with respect to $\bar{F}$ is bounded from above by $\lambda$. It then follows from \cite[Cor 9.8.1, pg. 254]{Bao2000} that
    \[\kappa = \frac{\bar{g}_{-\mathbf{n}}\left( D^{\dot{\bar{\gamma}}}_{\bar{\gamma}} J(r), J(r) \right)}{\bar{g}_{-\mathbf{n}}\left( J(r), J(r) \right)} \ge \mathfrak{ct}_\lambda(r),\]
    which tends to $\infty$ as $r \rightarrow 0$. Since $\kappa$ is an arbitrary eigenvalue of $A_{\mathbf{n}}$, the proof follows.
\end{proof}
\fi

\section*{Acknowledgments}
The authors would like to express their gratitude to S. Basu, I. Biswas and J. Itoh for many fruitful discussions and suggestions in the preparation of this article. They are also grateful to Prof. M. A. Javaloyes for explaining their article \cite{Alves2019} to us. The first author was supported by the NBHM grant no. 0204/1(5)/2022/R\&D-II/5649 and the second author was supported by Jilin University. 

\bibliographystyle{alphaurl}

\end{document}